\numberwithin{equation}{section}
\newtheorem{Theorem}{Theorem}[section]
\newtheorem{Lemma}{Lemma}[section]
\newtheorem{Definition}[Lemma]{Definition}
\newcommand{\C}{\mathbb{C}}
\renewcommand{\O}{\mathcal{O}}
\newcommand{\R}{\mathbb{R}}
\renewcommand{\S}{\mathbb{S}}
\newcommand{\del}{\partial}
\renewcommand{\phi}{\varphi}
\newcommand{\grad}{\nabla}
\renewcommand{\epsilon}{\varepsilon}
\newcommand{\Lip}{\mathrm{Lip}}
\title[An annulus and a half-helicoid maximize eigenvalues]{An annulus and a half-helicoid maximize Laplace eigenvalues}
\author{Sinan Ariturk}
\date{}
\begin{document}

\begin{abstract}
The Dirichlet eigenvalues of the Laplace-Beltrami operator are larger on an annulus than on any other surface of revolution in $\R^3$ with the same boundary.
This is established by defining a sequence of shrinking cylinders about the axis of symmetry and proving that flattening a surface outside of each cylinder succesively increases the eigenvalues.
A similar argument shows that the Dirichlet eigenvalues of the Laplace-Beltrami operator are larger on a half-helicoid than on any other screw surface in $\R^2 \times \S^1$ with the same boundary.
\end{abstract}

\maketitle

\section{Introduction}
Let $\Sigma$ be a compact connected smoothly immersed surface of revolution in $\R^3$ with two boundary components.
Let $\Delta_\Sigma$ be the Laplace-Beltrami operator on $\Sigma$.
Denote the Dirichlet eigenvalues of $-\Delta_\Sigma$ by
\[
	0 < \lambda_1(\Sigma) < \lambda_2(\Sigma) \le \lambda_3(\Sigma) \le \ldots
\]
Let $R_1$ and $R_2$ be the radii of the boundary components of $\Sigma$.
Assume
\[
	R_1 > R_2
\]
Let $A$ be an annulus in $\R^2$ with outer radius $R_1$ and inner radius $R_2$.
Denote the Dirichlet eigenvalues of the Laplacian on $A$ by
\[
	0 < \lambda_1(A) < \lambda_2(A) \le \lambda_3(A) \le \ldots
\]

\begin{Theorem}
\label{annulus}
If $\Sigma$ is not isometric to $A$, then, for every $j=1,2,3,\ldots,$
\[
	\lambda_j(\Sigma) < \lambda_j(A)
\]
\end{Theorem}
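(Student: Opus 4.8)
The plan is to pass to isothermal coordinates on $\Sigma$, in which the eigenvalue problem becomes a \emph{weighted flat} problem on a cylinder, and then to compare with $A$ by transplanting test functions. Parametrize $\Sigma$ by arc length $s\in[0,L]$ along a profile curve $\sigma\mapsto(r(\sigma),z(\sigma))$ and by the rotation angle $\theta\in\S^1$, so that the induced metric (which depends only on $r$, hence is insensitive to self-intersections of the immersion) is $ds^2+r(s)^2\,d\theta^2$, with $r(0)=R_1$, $r(L)=R_2$, and $\dot r^2+\dot z^2=1$; in particular $\dot r\ge -1$. Assuming first that $r>0$, introduce the conformal coordinate $t(s)=\int_0^s d\sigma/r(\sigma)$, under which the metric becomes $r^2(dt^2+d\theta^2)$ on the cylinder $C=[0,T]\times\S^1$, $T=t(L)$. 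Since the Dirichlet energy is conformally invariant in dimension two, for every $f$,
\[
	\int_\Sigma|\nabla f|^2\,dA=\int_C|\nabla_0 f|^2\,dt\,d\theta,\qquad \int_\Sigma f^2\,dA=\int_C r^2 f^2\,dt\,d\theta,
\]
where $\nabla_0$ and the right-hand integrals use the flat cylinder metric. By the min-max principle, $\lambda_j(\Sigma)$ is therefore the $j$-th min-max value of the Rayleigh quotient $\int_C|\nabla_0 f|^2\big/\int_C r^2 f^2$ over $H^1_0(C)$; note this needs no monotonicity of $r$.

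The comparison rests on two monotonicity facts, both consequences of $\dot r\ge -1$. Writing $r=r(t)$ one computes $\tfrac{d}{dt}\log r=\dot r\ge -1$, so $r(t)\ge R_1 e^{-t}$; evaluating at $t=T$ gives $R_2\ge R_1 e^{-T}$, i.e. $T\ge T_A:=\log(R_1/R_2)$. The annulus corresponds to the flat profile $\dot r\equiv -1$, i.e. to the cylinder $C_A=[0,T_A]\times\S^1$ with weight $r_A(t)^2=R_1^2 e^{-2t}$; thus $C_A\subseteq C$ and $r(t)^2\ge r_A(t)^2$ on $C_A$. Given $f\in H^1_0(C_A)$, extend it by zero to $\tilde f\in H^1_0(C)$: the numerators agree while $\int_C r^2\tilde f^2=\int_{C_A} r^2 f^2\ge\int_{C_A} r_A^2 f^2$, so the Rayleigh quotient on $\Sigma$ does not exceed that on $A$. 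Transplanting the span of the first $j$ eigenfunctions of $A$ and invoking min-max yields $\lambda_j(\Sigma)\le\lambda_j(A)$ for every $j$, directly for the full two-dimensional spectrum.

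For the strict inequality, suppose $\Sigma$ is not isometric to $A$, so $\dot r\ne -1$ on a set of positive measure; then $R_2=r(T)>R_1 e^{-T}$ and $T>T_A$. I would chain two comparisons: on the strictly larger \emph{connected} cylinder $C\supsetneq C_A$ with the \emph{same} weight $r^2$, strict Dirichlet domain monotonicity gives $\lambda_j(\Sigma)=\lambda_j(C,r^2)<\lambda_j(C_A,r^2|_{C_A})$, while $r^2\ge r_A^2$ on $C_A$ gives $\lambda_j(C_A,r^2|_{C_A})\le\lambda_j(A)$. Hence $\lambda_j(\Sigma)<\lambda_j(A)$ for all $j$.

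The hard part will be the standing assumption $r>0$: for a general immersed surface the profile may approach the axis of symmetry, where the conformal coordinate $t$ and the weight $r^2$ degenerate and $T$ may be infinite, so the one-step transplantation above is unavailable. This is exactly where I expect the shrinking-cylinder scheme of the abstract to enter: instead of comparing $\Sigma$ with $A$ at once, one flattens $\Sigma$ only outside a cylinder $\{r\ge\rho\}$, where $r$ is bounded below and the conformal picture is legitimate, uses the comparison above to show each such flattening increases every eigenvalue, and then lets $\rho$ decrease to $R_2$. Verifying that the intermediate flattened surfaces are admissible, that the monotonicity survives the limit $\rho\downarrow R_2$, and that strictness accumulates is the delicate technical point—but the conformal comparison is the mechanism that powers each individual step.
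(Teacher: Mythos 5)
Your argument is correct, and it reaches Theorem \ref{annulus} by a genuinely different and, for this particular theorem, much shorter route than the paper's. The paper never passes to conformal coordinates: it separates variables, reduces to the one--dimensional eigenvalues $\lambda_{k,n}$ of Lemma \ref{knannulus} and then to the general Lemma \ref{kngen}, and proves that lemma by an iterative scheme in which the meridian is flattened outside a sequence of shrinking cylinders determined by a Riccati analysis of the extremal eigenfunction (Lemmas \ref{Y}--\ref{mm1}), each flattening step being controlled by Kong--Zettl continuity, the Riesz sunrise lemma, and a derivative computation of the Rayleigh quotient along a deformation. Your mechanism is instead a single pointwise comparison: writing the metric as $r^2(dt^2+d\theta^2)$ on $[0,T]\times\S^1$, the arc--length constraint gives $\tfrac{d}{dt}\log r=\dot r\ge -1$, hence $r(t)\ge R_1e^{-t}=r_A(t)$ and $T\ge T_A$ with equality only for the annulus, after which zero extension, weight monotonicity, and strict domain monotonicity (which needs unique continuation for the higher eigenvalues, but the paper invokes the same strict property) finish the proof for the whole spectrum at once, with no separation of variables. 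Two remarks. First, the worry in your closing paragraph is unfounded for Theorem \ref{annulus}: an interior point of the meridian on the axis would force $\dot r=0$ there, so the image would be confined to a spindle $x^2+y^2\le C(z-z_0)^4$ whose tangent cone at the axis point is a line, and no smoothly immersed surface can pass through such a point; since both boundary radii are positive, $r$ is therefore bounded below by a positive constant, $T$ is finite, and your one--step argument is already complete --- no shrinking--cylinder iteration is needed. Second, what the paper's heavier machinery buys is generality: your comparison works because for the annulus the extremal conformal factor is exactly the exponential barrier $R_1e^{-t}$ produced by the bound $\dot r\ge-1$, i.e.\ because $V(x)=x$ in the notation of Lemma \ref{kngen}; for a general increasing $V$ (and in particular for the helicoid, where $V(x)=\sqrt{1+x^2}$) the analogous log--derivative bound does not yield a pointwise domination of the extremal weight, so your argument does not prove Lemma \ref{kngen} or Theorem \ref{helicoid}, whereas the paper's single argument covers both theorems and the higher--dimensional analogues mentioned after Lemma \ref{kngen}.
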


This inequality does not extend to surfaces which are not surfaces of revolution.
That is, there are compact connected smoothly embedded surfaces in $\R^3$, with two circular boundary components of radii $R_1$ and $R_2$,  which are not surfaces of revolution and have larger Dirichlet eigenvalues than the annulus.
These surfaces can be realized as graphs over an annulus.
This can be proven with Berger's variational formula \cite{Be}.

Theorem \ref{annulus} extends a similar result for a disc \cite{Adisc}.
That result shows that the Dirichlet eigenvalues of a compact connected surface of revolution in $\R^3$ with one boundary component are smaller than the Dirichlet eigenvalues of a planar disc with the same boundary.

To prove Theorem \ref{annulus}, we define a sequence of shrinking cylinders about the axis of symmetry.
Then we describe a procedure to flatten a surface outside of each cylinder.
We prove that repeating this for each cylinder successively increases the eigenvalues, and the theorem follows after finitely many iterations.

The argument can also be applied to establish a similar result for a half-helicoid in $\R^2 \times \S^1$.
To state this, consider the screw action of $\S^1$ on $\R^2 \times \S^1$.
Identifying $\R^2 \times \S^1$ with a subspace of $\C^2$, the screw action is defined by scalar multiplication.
Fix constants
\[
	R_1' > R_2' \ge 0
\]
Let $H$ be the helicoidal strip
\[
	H = \bigg\{ (t \cos \theta, t \sin \theta, e^{i\theta}) : R_2' \le t \le R_1', 0 \le \theta \le 2 \pi \bigg\}
\]
Let $\Delta_H$ be the Laplace-Beltrami operator on $H$, and denote the Dirichlet eigenvalues of $-\Delta_H$ by
\[
	0 < \lambda_1(H) < \lambda_2(H) \le \lambda_3(H) \le \ldots
\]
Let $S$ be a compact connected smoothly immersed surface in $\R^2 \times \S^1$ which is invariant under the screw action of $\S^1$.
Assume that the boundary of $S$ is non-empty and isometric to the boundary of $H$.
Let $\Delta_S$ be the Laplace-Beltrami operator on $S$, and denote the Dirichlet eigenvalues of $-\Delta_S$ by
\[
	0 < \lambda_1(S) < \lambda_2(S) \le \lambda_3(S) \le \ldots
\]

\begin{Theorem}
\label{helicoid}
If $S$ is not isometric to $H$, then for $j=1,2,3,\ldots$,
\[
	\lambda_j(S) < \lambda_j(H)
\]
\end{Theorem}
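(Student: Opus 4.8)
The plan is to mirror the flattening argument outlined for Theorem \ref{annulus}, after reducing the screw-symmetric problem to a family of one-dimensional problems. First I would exploit the $\S^1$ symmetry. Since $\zeta = e^{-i\theta}w$ (with $w\in\C$ the $\R^2$-coordinate) is invariant under the screw action, every screw surface $S$ is swept out from a profile curve $\gamma(u)$ in the $\zeta$-plane by $w = e^{i\theta}\gamma(u)$. Writing $\gamma$ in arclength and setting $\rho = |\gamma|$ and $\psi = \arg\gamma$, one computes the induced metric
\[
g = du^2 + 2\rho^2\psi'\,du\,d\theta + (1+\rho^2)\,d\theta^2 ,
\]
whose determinant collapses to $\det g = 1 + \rho^2(\rho')^2$, because the arclength condition forces $\rho^2(\psi')^2 = 1 - (\rho')^2$. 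In particular $|\rho'|\le 1$, and $H$ is exactly the profile with $\rho'\equiv 1$, $\psi'\equiv 0$. Separating variables as $\phi(u)e^{im\theta}$ and integrating in $\theta$, the off-diagonal term contributes $\operatorname{Re}(im\,\phi'\phi)=0$ for each real mode, so it drops out and the spectrum of $-\Delta_S$ is the union over $m\in\Z$ of the Sturm--Liouville spectra of
\[
Q_S^m(\phi) = \frac{\int \bigl[(1+\rho^2)(\phi')^2 + m^2\phi^2\bigr]\bigl(1+\rho^2(\rho')^2\bigr)^{-1/2}\,du}{\int \phi^2\,\bigl(1+\rho^2(\rho')^2\bigr)^{1/2}\,du}.
\]
Comparing the eigenvalue counting functions mode by mode, it is enough to prove $\lambda^m_k(S)\le\lambda^m_k(H)$ for every $m$ and $k$. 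This is precisely the one-dimensional problem from the annulus, with $1+\rho^2$ in place of $r^2$ and weight $\sqrt{1+\rho^2(\rho')^2}$ in place of $r$, which is why the argument should transfer.

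The mode $m=0$ is immediate. After reducing to a profile along which $\rho$ is monotone, reparametrize by $\rho$, so $du = d\rho/\rho'$ with $0<\rho'\le 1$. One checks directly that the radial-energy coefficient $(1+\rho^2)\rho'\,(1+\rho^2(\rho')^2)^{-1/2}$ only decreases and the $L^2$ weight $(\rho')^{-1}(1+\rho^2(\rho')^2)^{1/2}$ only increases as $\rho'$ drops below $1$. Hence $Q_S^0(\phi)\le Q_H^0(\phi)$ for every $\phi$, giving the comparison for all screw-invariant eigenvalues at once.

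For general $m$ I would introduce the flattening. Fix a radius $\rho_0\in(R_2',R_1')$ and let $S'$ be the screw surface whose profile agrees with that of $S$ where $\rho<\rho_0$ and is replaced by the straight radial segment (the half-helicoid model, $\rho'\equiv 1$) where $\rho\ge\rho_0$; then $S'$ is again a screw surface with the same boundary. Running this over a shrinking sequence $\rho_0=\rho^{(1)}>\rho^{(2)}>\cdots\to R_2'$ flattens $S$ to $H$. The crux is the lemma that each such flattening does not decrease any eigenvalue, and that it strictly increases some eigenvalue unless the flattened region was already straight; iterating and appealing to strictness would then yield $\lambda_j(S)\le\lambda_j(H)$ together with the rigidity statement $S\cong H$ at equality.

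The hard part will be exactly this monotonicity for $m\ne 0$. The naive device of transplanting the same function of $\rho$ fails: flattening lowers the radial energy and the $L^2$ weight, which raises the quotient, but it raises the angular energy, which lowers it, and a first-variation computation shows the integrand $-\sigma^{-2}(\phi')^2 + (m^2/r^2-\mu)\phi^2$ controlling the sign is not pointwise definite. I expect the resolution to run through the one-dimensional structure rather than a single transplantation: either a Sturm--Liouville nodal comparison of the two mode problems on $[R_2',R_1']$, which agree on $[R_2',\rho_0]$ and differ only on $[\rho_0,R_1']$, or, equivalently, a comparison of the Dirichlet-to-Neumann operators of the two outer pieces along the junction circle $\{\rho=\rho_0\}$, showing the model piece has the dominant Neumann data in each mode and concluding eigenvalue monotonicity for the glued surface. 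Secondary obstacles, already present in the annulus case, are the reduction to profiles along which $\rho$ is monotone (handling surfaces that fold in the radial direction) and checking that strictness, and hence rigidity, survives the iteration.
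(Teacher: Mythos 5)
Your reduction to one-dimensional mode problems and your overall flattening strategy do match the paper's, and you correctly isolate the crux: for $m\neq 0$ the first-variation integrand controlling a flattening step has no definite sign, so flattening outside an arbitrarily chosen radius cannot be justified by transplanting test functions. But at exactly this point the proposal stops at a statement of hope (``I expect the resolution to run through \dots a Sturm--Liouville nodal comparison \dots or a comparison of Dirichlet-to-Neumann operators'') rather than an argument, and this is where the entire difficulty of the theorem lives. The paper's resolution is specific and is not either of the alternatives you name. One first reduces to the lowest mode $n=1$ by a Sturm comparison of last nodal domains, so that the eigenfunction is positive and the Riccati variable $X=v\phi'/(\sigma\phi)$ is globally defined; one then shows (Lemma \ref{Y}) that the corresponding variable $Y=V_\omega\Phi'/\Phi$ for the model eigenfunction satisfies $Y'<0$ and $Y^2>k^2-\lambda V_\omega^2$, and a Riccati comparison $X\le Y$ (Lemma \ref{efes}) shows the first-variation integrand \emph{is} pointwise negative --- but only on the region where the weight exceeds a level $\mu_m$ defined recursively from $Y$ by $y_{m-1}=Y(V_\omega^{-1}(\mu_{m-1}))$ and $\mu_m=\sqrt{(k^2-y_{m-1}^2)/\lambda}$. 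A separate argument (Lemma \ref{mumono}) shows this sequence reaches the inner radius in \emph{finitely} many steps. Your freely chosen shrinking sequence $\rho^{(1)}>\rho^{(2)}>\cdots\to R_2'$ has no reason to satisfy the needed monotonicity lemma at any stage, and an infinite sequence merely converging to $R_2'$ would in addition require a limiting argument you do not supply. The ``crux lemma'' of your outline is therefore asserted, not proved, and as stated (for arbitrary cutoff radii) it is not what the paper establishes.

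A secondary gap: your separation of variables discards the off-diagonal term $2\rho^2\psi'\,du\,d\theta$ on the grounds that it integrates to zero against a real mode $\phi(u)\cos m\theta$. The quadratic form is not block-diagonal in the real basis $\{\cos m\theta,\sin m\theta\}$ --- the cross term couples the two --- so restricting to real modes yields only upper bounds for the eigenvalues of each complex Fourier block, and the bookkeeping needed to turn these into the two-sided identification of spectra you assert is missing. The clean fix, which is what the paper does, is to pass to the orbit space of the screw action and lift its geodesic by a curve meeting the $\S^1$-orbits orthogonally; this diagonalizes the induced metric to $h^*=dt^2+(1+|F_S|^2)\,ds^2$ and makes the mode decomposition exact.
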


In the next section, we reformulate both Theorem \ref{annulus} and Theorem \ref{helicoid} as statements about curves in a half-plane.
Then we consider a more general statement which implies both results.
In the third section, we begin the proof and reduce the problem to a special case.
In the fourth section, we conclude the proof by establishing this special case.

These problems are related to many other results on  optimization of Laplace eigenvalues.
The most classical of these problems is to minimize the Dirichlet eigenvalues of the Laplacian among open domains of fixed volume in Euclidean space.
The Faber-Krahn theorem states that a ball minimizes the first Dirichlet eigenvalue among such domains.
The Krahn-Szeg\"o theorem states that the union of two disjoint balls with equal radii minimizes the second Dirichlet eigenvalue.
Bucur and Henrot proved that there exists a quasi-open set which minimizes the third eigenvalue \cite{BH}.
This was extended to higher eigenvalues by Bucur \cite{Bu}.

Another problem is to fix a Euclidean ball and remove a smaller ball.
The radii of the balls are fixed and only the position of the smaller ball varies.
In two dimensions, Hersch proved that the concentric annulus maximizes the first Dirichlet eigenvalue among these domains \cite{H1}.
This was extended to higher dimensions by Harrell, Kr\"oger, and Kunata, as well as by Kesavan \cite{HKK, K}.
El Soufi and Kiwan proved that the concentric annulus maximizes the second Dirichlet eigenvalue among these domains \cite{EK}.

A related problem for the Laplace-Beltrami operator is to fix a two dimensional surface and maximize the Laplace eigenvalues among Riemannian metrics of fixed area.
Hersch proved that the canonical metric on $\S^2$ maximizes the first non-zero eigenvalue \cite{H2}.
On an orientable surface, Yang and Yau obtained upper bounds on the first eigenvalue, depending on the genus \cite{YY}.
These were extended to non-orientable surfaces by Li and Yau.
Li and Yau also showed that the canonical metric on $\mathbb{RP}^2$ maximizes the first non-zero eigenvalue \cite{LY}.
Nadirashvili proved the same is true for the flat equilateral torus, whose fundamental parallelogram is comprised of two equilateral triangles \cite{N1}.
It is not known if there is such a maximal metric on the Klein bottle, but Jakobson, Nadirashvili, and Polterovich showed there is a critical metric \cite{JNP}.
El Soufi, Giacomini, and Jazar proved this is the only critical metric on the Klein bottle \cite{EGJ}.
Nadirashvili proved that the second non-zero eigenvalue for any metric on $\S^2$ is less than the first non-zero eigenvalue of the canonical metric with half the area \cite{N2}.
Nadirashvili and Sire studied maximizing eigenvalues among metrics of the same area in a conformal class \cite{NS1, NS2}.
Petrides considered maximization in a conformal class and also proved an existence result for an eigenvalue-maximizing metric on an orientable surface \cite{P}.

For a closed compact hypersurface in $\R^{n+1}$, Chavel and Reilly obtained upper bounds for the first non-zero eigenvalue in terms of the surface area and the volume of the enclosed domain \cite{Ch,R}.
Abreu and Freitas proved that for a metric on $\S^2$ which can be isometrically embedded in $\R^3$ as a surface of revolution, the $\S^1$-invariant eigenvalues  are bounded by $\S^1$-invariant eigenvalues on a flat disc with half the area \cite{AF}.
Colbois, Dryden, and El Soufi extended this to metrics on $\S^n$ which can be isometrically embedded in $\R^{n+1}$ as hypersurfaces of revolution \cite{CDE}.

\section{Reformulation}

In this section, we reformulate both Theorem \ref{annulus} and Theorem \ref{helicoid} as statements about curves in a half-plane.
We reformulate Theorem \ref{annulus} in the first subsection and Theorem \ref{helicoid} in the second subsection.
In the third subsection, we consider a more general statement which implies both Theorem \ref{annulus} and Theorem \ref{helicoid}.
Then the rest of the paper is dedicated to proving this general statement.

\subsection{Reformulation of Theorem 1.1}

Fix a plane in $\R^3$ containing the axis of symmetry of $\Sigma$.
Identify this plane with $\R^2$, so that the axis of symmetry is identified with the axis
\[
	\bigg\{ (0,y) \in \R^2 : y \in \R \bigg\}
\]
Let $\R^2_+$ denote the half-plane given by
\[
	\R^2_+ = \bigg\{ (x,y) \in \R^2 : x > 0 \bigg\}
\]
We may assume that the boundary component of $\Sigma$ which has radius $R_1$ intersects $\R^2$ at the point $(R_1,0)$.
Let $L_\Sigma$ be the length of a meridian of $\Sigma$ and let
\[
	\alpha_\Sigma:[0,L_\Sigma] \to \R^2_+
\]
be a regular arc length parametrization of the meridian $\Sigma \cap \R^2_+$ with
\[
	\alpha_\Sigma(0)=(R_1,0)
\]

For each $j=1,2,3,\ldots$, the eigenvalues $\lambda_j(\Sigma)$ can be characterized as
\[
	\lambda_j(\Sigma) = \min_V \max_{f \in V} \bigg\{ \frac{ \int_\Sigma | \grad f |^2}{\int_\Sigma |f|^2} \bigg\}
\]
Here the minimum is taken over all $j$-dimensional subspaces $V$ of $C_0^\infty(\Sigma)$, the space of smooth real-valued functions on $\Sigma$ which vanish on $\del \Sigma$.
The minimum is attained by the subspace generated by the first $j$ eigenfunctions.
Note that since $\Sigma$ is a surface of revolution, separation of variables reduces the eigenfunction equation to an ordinary differential equation.

Write $\alpha_\Sigma=(F_\Sigma,G_\Sigma)$, i.e. let $F_\Sigma$ and $G_\Sigma$ be the component functions of $\alpha_\Sigma$.
For a non-negative integer $k$ and a positive integer $n$, define
\[
	\lambda_{k,n}(\alpha_\Sigma) = \min_W \max_{w \in W} \left\{ \frac{ \int_0^{L_\Sigma} |w'|^2 F_\Sigma + \frac{k^2 |w|^2}{F_\Sigma} \,dt}{ \int_0^{L_\Sigma} |w|^2 F_\Sigma \,dt} \right\}
\]
Here the minimum is taken over all $n$-dimensional subspaces $W$ of $C_0^\infty(0,L_\Sigma)$, the space of smooth real-valued functions on $[0,L_\Sigma]$ which vanish at the endpoints.
It follows from separation of variables that
\[
	\bigg\{ \lambda_{k,n}(\alpha_\Sigma) \bigg\} = \bigg\{ \lambda_j(\Sigma) \bigg\}
\]
Moreover, if the eigenvalues $\lambda_{k,n}(\alpha_\Sigma)$ are counted twice for $k\neq 0$, then the multiplicities agree.

Define a curve
\[
	\omega_A:[0, R_1-R_2] \to \R^2_+
\]
by
\[
	\omega_A(t) = (R_1-t,0)
\]
Then $\omega_A$ parametrizes a meridian of an annulus, isometric to $A$.
Define $\lambda_{k,n}(\omega_A)$ similarly to $\lambda_{k,n}(\alpha_\Sigma)$.
Then
\[
	\bigg\{ \lambda_{k,n}(\omega_A) \bigg\} = \bigg\{ \lambda_j(A) \bigg\}
\]
Again, if the eigenvalues $\lambda_{k,n}(\omega_A)$ are counted twice for $k\neq 0$, then the multiplicities agree.
Now, to prove Theorem \ref{annulus} it suffices to prove the following lemma.

\begin{Lemma}
\label{knannulus}
Assume that $\alpha_\Sigma$ is not equal to $\omega_A$.
Then for all non-negative $k$ and all positive $n$,
\[
	\lambda_{k,n}(\alpha_\Sigma) < \lambda_{k,n}(\omega_A)
\]
\end{Lemma}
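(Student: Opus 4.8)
The plan is to avoid the centrifugal obstruction entirely by a single change of variables that decouples the profile from the differential operator. The crucial observation is that the Rayleigh quotient defining $\lambda_{k,n}(\alpha_\Sigma)$ depends on $\alpha_\Sigma$ only through $F_\Sigma$ and $L_\Sigma$, and that the arc-length condition forces $|F_\Sigma'|\le 1$ with $F_\Sigma(0)=R_1$, $F_\Sigma(L_\Sigma)=R_2$. Since the meridian lies in the open half-plane, $F_\Sigma$ is continuous and strictly positive on the compact interval $[0,L_\Sigma]$, hence bounded below; this makes the substitution $\xi(t)=\int_0^t ds/F_\Sigma(s)$ a valid, strictly increasing change of variable onto $[0,\Xi]$, where $\Xi=\int_0^{L_\Sigma}ds/F_\Sigma(s)$. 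Viewing a test function $w$ as a function of $\xi$, a direct computation gives $\int F_\Sigma|w'|^2\,dt=\int w_\xi^2\,d\xi$, $\int k^2|w|^2/F_\Sigma\,dt=\int k^2 w^2\,d\xi$, and $\int |w|^2 F_\Sigma\,dt=\int w^2 F_\Sigma^2\,d\xi$. Thus the numerator becomes the profile-independent form $\int_0^\Xi (w_\xi^2+k^2w^2)\,d\xi$, and all dependence on the surface is pushed into the weight $F_\Sigma^2$ and the length $\Xi$ of the new interval.

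The next step is to record a pointwise bound on this weight. Setting $\phi=\ln F_\Sigma$ as a function of $\xi$, the chain rule gives $\phi_\xi=F_\Sigma'(t)$, so $|\phi_\xi|\le 1$ and $\phi(0)=\ln R_1$; hence $\phi(\xi)\ge \ln R_1-\xi$ for all $\xi$. For the annulus $\omega_A$ one has $F_{\omega_A}'\equiv -1$, so $\phi_A(\xi)=\ln R_1-\xi$ and $\Xi_A=\ln(R_1/R_2)$, which is the smallest admissible length. Consequently $F_\Sigma^2\ge R_1^2 e^{-2\xi}=\bar F^2$ pointwise on $[0,\Xi_A]$, and $\Xi\ge\Xi_A$: the annulus has simultaneously the smallest weight and the shortest interval. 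This is exactly the point of the logarithmic substitution—both effects now push in the same direction, whereas in the original variables the centrifugal term $k^2/F_\Sigma$ fights the length gain and obstructs any naive comparison.

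With these two facts the inequality follows from the min--max characterization, uniformly in $k$ and $n$. Let $W_A\subset C_0^\infty(0,\Xi_A)$ be the $n$-dimensional span of the first $n$ eigenfunctions of the $\omega_A$-problem, so that $\max_{w\in W_A}\mathcal R_{\omega_A}[w]=\lambda_{k,n}(\omega_A)$. Extending each $w\in W_A$ by zero to $[0,\Xi]$ is legitimate because of the Dirichlet condition, and it leaves the numerator unchanged while the denominator can only increase, since $F_\Sigma^2\ge\bar F^2$ on $[0,\Xi_A]$ and the extension contributes nothing there. Therefore $\mathcal R_{\alpha_\Sigma}[w]\le \mathcal R_{\omega_A}[w]$ for every $w\in W_A$, and taking the maximum over this fixed $n$-dimensional space gives $\lambda_{k,n}(\alpha_\Sigma)\le\lambda_{k,n}(\omega_A)$.

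The remaining work, which I expect to be the main obstacle, is strictness together with the analytic justification of the substitution. If $\alpha_\Sigma\neq\omega_A$, then $\phi\equiv\phi_A$ on $[0,\Xi_A]$ would force $F_\Sigma'\equiv -1$ there and hence identify the curve with $\omega_A$; so instead $F_\Sigma^2>\bar F^2$ on a subset of $[0,\Xi_A]$ of positive measure. The plan is then to use that the first $n$ eigenfunctions of the annulus problem are solutions of a regular Sturm--Liouville equation, hence real-analytic, so any nontrivial $w\in W_A$ vanishes only on a discrete set; consequently $\int_0^{\Xi_A} w^2\,(F_\Sigma^2-\bar F^2)\,d\xi>0$ and $\mathcal R_{\alpha_\Sigma}[w]<\mathcal R_{\omega_A}[w]$ for every nonzero $w\in W_A$. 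A compactness argument on the unit sphere of $W_A$ upgrades this to a strict inequality of maxima, yielding $\lambda_{k,n}(\alpha_\Sigma)<\lambda_{k,n}(\omega_A)$. The points that must be nailed down carefully are the uniform lower bound $F_\Sigma\ge\delta>0$ guaranteeing that $\xi$ is finite and the change of variables is a diffeomorphism, the rigidity statement that equality of profiles forces $\alpha_\Sigma=\omega_A$, and the almost-everywhere nonvanishing of the eigenfunctions; none of these requires the delicate iterated flattening, which is the payoff of treating all $k$ at once.
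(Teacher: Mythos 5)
Your argument is correct in its essentials and takes a genuinely different, and considerably shorter, route than the paper. The paper proves this lemma as a special case of a general statement (Lemma \ref{kngen}) via an iterated flattening procedure: a sequence of thresholds $\mu_m$ is produced by a Riccati-type analysis of the annulus eigenfunction, the curve is straightened outside successively smaller cylinders, each step is shown to raise the eigenvalues by a continuity-and-monotonicity argument in a deformation parameter, and the Sturm comparison theorem reduces everything to $n=1$. Your Liouville substitution $\xi=\int_0^t ds/F_\Sigma$ sidesteps all of this in the Euclidean annulus case: it renders the numerator $\int w_\xi^2+k^2w^2\,d\xi$ profile-independent, so the competition between the length gain and the centrifugal term --- exactly what forces the paper's iteration --- disappears, and the whole comparison reduces to the pointwise bound $F_\Sigma(\xi)\ge R_1e^{-\xi}$ (from $\big|\tfrac{d}{d\xi}\ln F_\Sigma\big|=|F_\Sigma'(t)|\le1$) together with $\Xi\ge\Xi_A$. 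The computations check out, and the technical points you flag are routine: $F_\Sigma$ is positive on a compact interval so $\xi$ is bi-Lipschitz, smooth subspaces are dense, and nontrivial linear combinations of eigenfunctions of $-w''+k^2w=\lambda R_1^2e^{-2\xi}w$ are real-analytic, hence vanish only on a discrete set. What you give up relative to the paper is generality: Lemma \ref{kngen} covers arbitrary $V$ and $g_{yy}$ (hence the helicoid), although your substitution should extend to that setting via an ODE comparison for $u=V\circ F_\alpha$ as a function of $\xi=\int dt/V_\alpha$.

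The one genuine gap is in your strictness dichotomy. From $\alpha_\Sigma\ne\omega_A$ you conclude that $F_\Sigma^2>\bar F^2$ on a set of positive measure in $[0,\Xi_A]$, arguing that equality everywhere would force $F_\Sigma'\equiv-1$ and hence $\alpha_\Sigma=\omega_A$. That implication only identifies the restriction of $\alpha_\Sigma$ to $[0,R_1-R_2]$ with $\omega_A$: the meridian may coincide with $\omega_A$ all the way to $(R_2,0)$ and then continue, leaving radius $R_2$ and returning to it, in which case $F_\Sigma^2\equiv\bar F^2$ on all of $[0,\Xi_A]$ while $\alpha_\Sigma\ne\omega_A$ and $L_\Sigma>R_1-R_2$. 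There your comparison yields only $\lambda_{k,n}(\alpha_\Sigma)\le\lambda_{k,n}(\omega_A)$. The repair is immediate: in that case $\Xi>\Xi_A$ strictly, the problem restricted to $[0,\Xi_A]$ is exactly the annulus problem, and the strict monotonicity of $\lambda_{k,n}$ under restriction to a proper subinterval (recorded in the paper just after Lemma \ref{kngen}) supplies the strict inequality. With that case added, the proof is complete.
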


\subsection{Reformulation of Theorem 1.2}
Let $\O$ denote the space of orbits of the screw action on $\R^2 \times \S^1$.
There is a differentiable structure and a Riemannian metric on $\O$ such that the projection map $\R^2 \times \S^1 \mapsto \O$ is a Riemannian submersion.
Let
\[
	\R^2_+ = \bigg\{ (r,\theta) \in \R^2 : r >0 \bigg\}
\]
Define a map from $\overline{\R^2_+}$ to $\O$ by
\[
	(r,\theta) \mapsto [r \cos \theta, r \sin \theta, 1 ]
\]
Here $[r \cos \theta, r \sin \theta,1]$ denotes the orbit containing $(r \cos \theta,r \sin \theta,1)$.
Let $g^*$ be the pullback metric on $\R^2_+$, which takes the form
\[
		g^*= dr^2 + \bigg(\frac{r^2}{1+r^2} \bigg) d\theta^2
\]

The surface $H$ in $\R^2 \times \S^1$ projects to a geodesic in $\O$.
This geodesic can be lifted to a curve in $\R^2_+$, parametrized by a function
\[
	\omega_H:[0,R_1'-R_2'] \to \overline{\R^2_+}
\]
defined by
\[
	\omega_H(t) = (R_1'-t,0)
\]

The surface $S$ similarly projects to a curve in $\O$.
Recall the Dirichlet eigenvalues satisfy the following domain monotonicity property.
If $\Omega$ is a compact Riemannian manifold with smooth boundary and $U$ is a smoothly bounded proper open subset of $\Omega$, then the Dirichlet eigenvalues of $U$ are larger than the Dirichlet eigenvalues of $\Omega$.
Therefore, to prove Theorem \ref{helicoid}, it suffices to consider the case where $[0,0,1]$ is not an interior point of the curve in $\O$ associated to $S$.
Lift this curve to $\overline{\R^2_+}$.
Let $L_S$ be the length of this curve with respect to $g^*$, and let
\[
	\alpha_S:[0,L_S] \to \overline{\R^2_+}
\]
be an arc length parametrization.
We may assume that
\[
	\alpha_S(0)=(R_1',0)
\]
Also lift the curve in $\O$ associated to $S$ to a curve in $\R^2 \times \S^1$, parametrized by a function
\[
	\tilde \alpha_S:[0,L_S] \to \R^2 \times \S^1
\]
which intersects orbits of $\S^1$ orthogonally.
There is a diffeomorphism from $[0,L_S] \times \S^1$ to $S$ defined by
\[
	(t,e^{is}) \mapsto e^{is} \cdot \tilde \alpha_S(t)
\]
Let $h^*$ be the pullback metric on $[0,L_S] \times \S^1$.
Write $\alpha_S=(F_S,G_S)$, i.e. let $F_S$ and $G_S$ be the component functions of $\alpha_S$.
The metric $h^*$ takes the form
\[
		h^*= dt^2 + ( | F_S |^2 + 1) ds^2
\]

For each $j=1,2,3,\ldots,$ the eigenvalues $\lambda_j(S)$ can be characterized as
\[
	\lambda_j(S) = \min_V \max_{f \in V} \bigg\{ \frac{ \int_S | \grad f |^2 }{\int_S |f|^2 } \bigg\}
\]
Here the minimum is taken over all $j$-dimensional subspaces $V$ of $C_0^\infty(S)$, the space of smooth real-valued functions on $S$ which vanish on $\del S$.
The minimum is attained by the subspace generated by the first $j$ eigenfunctions.
Note that separation of variables reduces the eigenfunction equation to an ordinary differential equation.
For a non-negative integer $k$ and a positive integer $n$, define
\[
	\lambda_{k,n}(\alpha_S) = \min_W \max_{w \in W} \left\{ \frac{\int_0^{L_S} | w' |^2 (1+|F_S|^2)^{1/2} + \frac{k^2 w^2}{(1+|F_S|^2)^{1/2}} \,dt}{\int_0^{L_S} w^2 (1+|F_S|^2)^{1/2} \,dt} \right\}
\]
Here the minimum is taken over all $n$-dimensional subspaces $W$ of $C_0^\infty(0,L_S)$.
It follows that
\[
	\bigg\{ \lambda_{k,n}(\alpha_S) \bigg\} = \bigg\{ \lambda_j(S) \bigg\}
\]
Moreover, if the eigenvalues $\lambda_{k,n}(\alpha_S)$ are counted twice for $k\neq 0$, then the multiplicities agree.

Define $\lambda_{k,n}(\omega_H)$ similarly to $\lambda_{k,n}(\alpha_S)$.
Then
\[
	\bigg\{ \lambda_{k,n}(\omega_H) \bigg\} = \bigg\{ \lambda_j(H) \bigg\}
\]
Again, if the eigenvalues $\lambda_{k,n}(\omega_H)$ are counted twice for $k \neq 0$, then the multiplicities agree.
Now to prove Theorem \ref{helicoid}, it suffices to prove the following lemma.

\begin{Lemma}
\label{knheli}
If $\alpha_S$ is not equal to $\omega_H$, then for any non-negative integer $k$ and any positive integer $n$,
\[
	\lambda_{k,n}(\alpha_S) < \lambda_{k,n}(\omega_H)
\]
\end{Lemma}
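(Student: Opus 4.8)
The plan is to deduce Lemma \ref{knheli} (and simultaneously Lemma \ref{knannulus}) from a single statement about the weight $\rho$, where $\rho(r)=(1+r^2)^{1/2}$ in the present case and $\rho(r)=r$ for the annulus. Writing $\alpha_S=(F_S,G_S)$ in arc length gives $|F_S'|^2+|G_S'|^2\equiv 1$, hence $|F_S'|\le 1$, with equality exactly where the curve is horizontal. With $\rho(t)=(1+|F_S(t)|^2)^{1/2}$, the number $\lambda_{k,n}(\alpha_S)$ is the $n$-th Dirichlet eigenvalue of the Sturm--Liouville problem
\[
	-(\rho\,w')'+\frac{k^2}{\rho}\,w=\lambda\,\rho\,w,\qquad w(0)=w(L_S)=0,
\]
while $\omega_H$ yields the same problem on the shorter interval $[0,R_1'-R_2']$ with $\rho=\rho(R_1'-\tau)$. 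Thus the whole comparison is controlled by the profile $t\mapsto F_S(t)$ subject to $|F_S'|\le 1$, and the hypothesis $\alpha_S\ne\omega_H$, which forces $L_S>R_1'-R_2'$ and hence $|F_S'|<1$ on a set of positive measure, is exactly what will produce the strict inequality.

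I would first record the computation behind the inequality on an interval where $F_S$ is monotone, since it both motivates the strategy and settles the case $k=0$. Changing variables from arc length $t$ to the radius $r=F_S(t)$, with $p=|F_S'|\in(0,1]$ and $dt=dr/p$, turns the Rayleigh quotient into
\[
	\frac{\displaystyle\int p\,\rho\,|u'|^2+\frac{k^2}{p\,\rho}\,u^2\,dr}{\displaystyle\int \frac{\rho}{p}\,u^2\,dr}
\]
over the fixed interval $r\in[R_2',R_1']$. When $k=0$ the factor $p\le 1$ shrinks the numerator and enlarges the denominator, so this quotient is dominated termwise by the one for $\omega_H$ (where $p\equiv 1$); the min--max characterization then gives $\lambda_{0,n}(\alpha_S)\le\lambda_{0,n}(\omega_H)$, strictly unless $p\equiv 1$.

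For general $k$, and for curves that are not monotone, I would run the flattening scheme indicated for Sections 3 and 4. Fixing radii $c_1>c_2>\cdots$ decreasing to $R_2'$ — the shrinking cylinders $r=c_m$ — I replace at the $m$-th stage the part of the curve lying in $\{r\ge c_m\}$ by the horizontal segment with the same endpoints, leaving the portion inside the cylinder unchanged. Each step keeps the endpoint radii and the weight $\rho=\rho(F)$ but shortens the interval, and flattening the outermost shell first and then moving inward drives the curve to $\omega_H$ after finitely many iterations. It therefore suffices to prove the special case that forms the heart of the argument: one flattening step, straightening the curve outside a single cylinder, never decreases any $\lambda_{k,n}$, and strictly increases it unless the flattened part was already horizontal.

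The hard part will be this single step when $k>0$, where the clean termwise domination breaks down: straightening shrinks the gradient weight $p\rho$ but, through $k^2/(p\rho)$, simultaneously enlarges the potential, so the two effects compete and the sign of the net change depends on where along the curve $p$ is small. My plan is to exploit the order of flattening. At the $m$-th step the two problems differ only on the shell $\{c_m\le r\le c_{m-1}\}$, and there the measure $(\rho/p)\,dr$ controlling the $L^2$ norm is the reweighting of the measure $dr/(p\rho)$ controlling the potential by the factor $\rho^2$, which is increasing in $r$. Flattening the outermost shell first is precisely the order for which this reweighting correlates favorably with $1/p$, so that the potential term moves the right way. Reducing via Section 3 to an elementary flattened piece — with $F$ monotone, or constant along a cylindrical segment where the coefficients are constant and the eigenfunctions explicit — should make this correlation estimate rigorous and yield the strict inequality. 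Isolating and proving exactly this elementary comparison is the crux of the whole proof.
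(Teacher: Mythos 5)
Your overall strategy---reduce to a one-dimensional Sturm--Liouville comparison with weight $(1+r^2)^{1/2}$, then flatten the curve outside a sequence of shrinking cylinders---is exactly the strategy of the paper (which folds Lemma \ref{knheli} and Lemma \ref{knannulus} into the single general statement Lemma \ref{kngen} with an abstract weight $V$). Your $k=0$ computation is correct for monotone profiles, and your identification of the crux (one flattening step must not decrease $\lambda_{k,n}$ when $k>0$) is accurate. But the crux is precisely what you do not prove, and the mechanism you sketch for it is not the right one. First, the cylinder radii cannot be an arbitrary sequence decreasing to $R_2'$: with such a choice you would need infinitely many steps and then a limiting argument. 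In the paper the radii are forced: $\mu_0=k/\sqrt{\lambda_{k,n}(\omega)}$ and then $\mu_m$ is determined recursively from the Riccati variable $Y=V_\omega\Phi'/\Phi$ of the eigenfunction of the flat curve via $\mu_m=\max\{V(r_2),\sqrt{(k^2-y_{m-1}^2)/\lambda_{k,1}(\omega)}\}$, and a separate argument (Lemma \ref{mumono}, resting on the strict inequality $Y^2>k^2-\lambda_{k,1}(\omega)V_\omega^2$ of Lemma \ref{Y}) shows this sequence hits $V(r_2)$ in finitely many steps. Nothing in your proposal produces either the correct radii or the finiteness.

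Second, your heuristic that ``the reweighting correlates favorably with $1/p$'' does not reflect how the competition between the gradient term and the potential term is actually resolved. The first variation of the Rayleigh quotient under the flattening homotopy is an integral over the flattened region of
\[
\Bigl(-\tfrac{|\phi'|^2v^2}{\sigma^2}+k^2|\phi|^2-\lambda|\phi|^2v^2\Bigr)
\]
against a nonnegative density, where $\phi$ is the \emph{first} eigenfunction of the current (partially flattened) curve; the needed sign is the \emph{pointwise} inequality $X^2>k^2-\lambda v^2$ for $X=v\phi'/(\sigma\phi)$ on the region $\{v>\mu_m\}$ (Lemma \ref{efes}), proved by a Riccati comparison of $X$ with $Y$ that uses the inductive hypothesis that the curve already coincides with $\omega$ out to level $\mu_{m-1}$. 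This is a statement about the eigenfunction, not about a correlation between the measures $(\rho/p)\,dr$ and $dr/(p\rho)$, and I do not see how your correlation estimate could be made to work as stated. Two further omissions: you never reduce from general $n$ to $n=1$ (the paper does this by restricting to the last nodal domain and invoking the Sturm comparison theorem; the positivity of $\phi$ is essential to the Riccati argument above), and you do not address the monotonization needed when $F_S$ is not monotone (the running-minimum/sunrise-lemma step), without which even the change of variables in your $k=0$ argument is unavailable. As it stands the proposal is a correct outline of the known strategy with the central lemma left as a conjecture.
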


\subsection{General Statement}
In this subsection, we consider a more general problem that implies Lemma \ref{knannulus} and Lemma \ref{knheli}.
Let
\[
	\R^2_+ = \bigg\{ (x,y) \in \R^2 : x>0 \bigg\}
\]
Let $g$ be a Riemannian metric on $\R^2_+$ of the form
\[
	g = dx^2 + g_{yy}(x) \,dy^2
\]
where
\[
	g_{yy}:[0,\infty) \to [0,\infty)
\]
is a smooth function which is positive over $(0,\infty)$.
Let
\[
	V: [0, \infty) \to [0,\infty)
\]
be another smooth function which is positive over $(0, \infty)$, and assume that $V'$ is positive over $(0,\infty)$.
Fix constants
\[
	r_1 > r_2 \ge 0
\]
If $V(0)=0$, then assume that $r_2>0$.

Let $L>0$ be finite and let
\[
	\alpha:[0,L] \to \overline{\R^2_+}
\]
be a piecewise smooth curve.
Assume that $\alpha(t)$ is in $\R^2_+$ for all $t$ in $[0,L)$.
Also assume that $\alpha$ is parametrized by arc length with respect to $g$.
Write $\alpha=(F_\alpha,G_\alpha)$, i.e. let $F_\alpha$ and $G_\alpha$ be the component functions of $\alpha$.
Assume that
\[
	\alpha(0) = (r_1,0)
\]
and
\[
	F_\alpha(L) = r_2
\]
Define a curve
\[
	\omega:[0,r_1-r_2] \to \overline{\R^2_+}
\]
by
\[
	\omega(t) = (r_1 - t, 0)
\]

Let $V_\alpha=V \circ F_\alpha$.
For a non-negative real number $k$ and a positive integer $n$, define
\[
	\lambda_{k,n}(\alpha) = \min_W \max_{w \in W} \left\{ \frac{ \int_0^L |w'|^2 V_\alpha + \frac{k^2 |w|^2}{V_\alpha} \,dt}{ \int_0^L |w|^2 V_\alpha \,dt} \right\}
\]
Here the minimum is taken over all $n$-dimensional subspaces $W$ of $C_0^\infty(0,L)$.
Define $\lambda_{k,n}(\omega)$ similarly.

\begin{Lemma}
\label{kngen}
If $\alpha$ is not equal to $\omega$, then for any non-negative real number $k$ and any positive integer $n$,
\[
	\lambda_{k,n}(\alpha) < \lambda_{k,n}(\omega)
\]
\end{Lemma}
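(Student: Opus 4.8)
The plan is to strip the statement down to a one-dimensional weighted Sturm--Liouville comparison and then to reach $\omega$ from $\alpha$ by flattening the curve from the outer boundary inward, showing that each flattening strictly raises every $\lambda_{k,n}$. First I would note that $\lambda_{k,n}(\alpha)$ sees the curve and the metric only through the scalar $F_\alpha$, and that the arc length normalization enters only as the pointwise bound $|F_\alpha'|\le 1$, with equality exactly where $G_\alpha'=0$; in particular $g_{yy}$ and $G_\alpha$ are otherwise irrelevant, and $L\ge r_1-r_2$ with equality iff $\alpha=\omega$. After the reductions carried out in the next section (removing backtracking and the excursions with $F_\alpha>r_1$ via the domain monotonicity recalled above, so that $F_\alpha$ may be taken monotone with values in $[r_2,r_1]$), I would reparametrize by $x=F_\alpha$. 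Writing $\rho(x)=|F_\alpha'|\in(0,1]$ and $dt=dx/\rho$, the quotient defining $\lambda_{k,n}(\alpha)$ becomes, on the fixed interval $[r_2,r_1]$,
\[
	\frac{\int_{r_2}^{r_1}\rho\,V\,|u'|^2+\frac{k^2}{\rho V}|u|^2\,dx}{\int_{r_2}^{r_1}\frac{V}{\rho}|u|^2\,dx},
\]
with $u$ vanishing at the endpoints, whereas $\lambda_{k,n}(\omega)$ is the same expression with $\rho\equiv1$. Everything thus reduces to showing that raising the weight $\rho\le1$ toward $1$ strictly increases these eigenvalues, provided the enlargement is performed at large $x$ first.

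The case $k=0$ is immediate: for any fixed $u$ the gradient integral $\int\rho V|u'|^2$ only decreases and the mass integral $\int\rho^{-1}V|u|^2$ only increases when $\rho\le1$, so the quotient drops, and feeding the optimal subspace for $\omega$ into the min--max yields $\lambda_{0,n}(\alpha)<\lambda_{0,n}(\omega)$ directly. For $k>0$ this termwise argument fails, because the potential $\int k^2\rho^{-1}V^{-1}|u|^2$ also grows; a symmetrization identity shows that the net sign of the potential contribution is governed by whether $\rho$ is smaller at larger $x$, and where $\rho$ dips near the inner boundary (small $V$) this term is adverse. Controlling this competition is the main obstacle, and it is what forces the localized, iterative procedure rather than a single global substitution.

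To handle $k>0$ I would argue infinitesimally. Viewing $\lambda_{k,n}(\rho)$ as a function of the weight and differentiating at a unit-normalized eigenfunction $u$, the first variation under an admissible increase $\delta\rho\ge0$ is
\[
	\delta\lambda=\int\delta\rho\,\Big(V|u'|^2+\rho^{-2}V|u|^2\big(\lambda-k^2V^{-2}\big)\Big)\,dx,
\]
so flattening raises $\lambda$ wherever $\lambda\ge k^2/V^2$, strictly once $\alpha\ne\omega$. Since $V$ attains its maximum at $r_1$ and one always has $\lambda_{k,n}\ge k^2/V(r_1)^2$ (the potential term alone bounds the quotient below by $k^2\min V^{-2}$), this sign condition holds on the outermost shell; this is precisely why one straightens outside the largest cylinder first. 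Carrying the flattening inward through a finite sequence of shrinking cylinders $r_1=c_0>\cdots>c_N=r_2$, the eigenvalue only grows at each step, which is what keeps the inequality $\lambda\ge k^2/V(c)^2$ in force as $c$ decreases.

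The crux, which I expect to occupy the final section, is to make this maintenance rigorous: to show that along the flattening the inequality $\lambda\ge k^2/V^2$ is never lost on the shell currently being straightened, so that each single flattening step is genuinely favorable, and to treat the possible multiplicity of $\lambda$ through one-sided derivatives of the eigenvalue. Granting this, the successive flattenings give a chain $\lambda_{k,n}(\alpha)\le\cdots\le\lambda_{k,n}(\omega)$, and the strictness recorded in the first-variation formula (together with $\alpha\ne\omega$, which guarantees at least one nontrivial step with $u'\not\equiv0$ on the flattened shell) upgrades this to the strict inequality $\lambda_{k,n}(\alpha)<\lambda_{k,n}(\omega)$.
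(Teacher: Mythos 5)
Your overall strategy is the paper's: reduce to a weighted one-dimensional Rayleigh quotient, flatten from the outer boundary inward through finitely many shrinking cylinders, and use a first-variation formula whose integrand is $V|u'|^2+\rho^{-2}V|u|^2(\lambda-k^2V^{-2})$ (this matches the paper's $\Psi'(s_0)$ exactly). But two of your steps have genuine gaps. First, you cannot ``remove backtracking \ldots via domain monotonicity'' so that $F_\alpha$ is monotone: domain monotonicity only lets you restrict $\alpha$ to a subinterval, which forces $r_2<F_\alpha<r_1$ in the interior but leaves $F_\alpha$ non-monotone in general, and without monotonicity your reparametrization by $x=F_\alpha$ with density $\rho=|F_\alpha'|$ is not defined. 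Monotonization (replacing $F$ by its running minimum) changes the curve and must itself be shown to raise eigenvalues; the paper proves this (Lemma 3.6) only on the outer region where $V\ge\mu_0=k/\sqrt{\lambda_{k,n}(\omega)}$, because the key estimate $\int_I k^2|w|^2|\beta_0'|/V_{\beta_0}\le\lambda_{k,n}(\omega)\int_I|w|^2V_{\beta_0}|\beta_0'|$ on the excised set $I$ needs $V\ge\mu_0$ there. Monotonization near the inner boundary, where $V$ is small, is exactly as problematic as flattening there, so it has to be interleaved with the iteration, not done upfront.

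Second, and more seriously, your mechanism for choosing and advancing the cylinders does not work as stated. You propose that the eigenvalue gain at each step ``keeps the inequality $\lambda\ge k^2/V(c)^2$ in force as $c$ decreases,'' but the gain at a given step can be zero or arbitrarily small, so this gives no quantitative decrease of $c$ and no finite termination at $r_2$; moreover the pointwise condition $\lambda\ge k^2/V^2$ is not the right one below $V=\mu_0$, where only the gradient term $V|u'|^2$ can rescue the sign of the integrand. The paper's resolution is the missing idea: the thresholds $\mu_0>\mu_1>\cdots$ are defined from the \emph{fixed} model eigenfunction $\Phi$ on $\omega$ via its Riccati variable $Y=V_\omega\Phi'/\Phi$, setting $\mu_m=\max\{V(r_2),\sqrt{(k^2-Y(V_\omega^{-1}(\mu_{m-1}))^2)/\lambda_{k,1}(\omega)}\}$; finite termination (Lemma 4.2) follows from the strict inequality $Y^2>k^2-\lambda_{k,1}(\omega)V_\omega^2$, and the sign of the first-variation integrand down to $V=\mu_m$ (Lemma 4.5) is obtained by comparing the Riccati variable $X=v\phi'/(\sigma\phi)$ of the current curve's eigenfunction with $Y$ on the already-flattened outer segment and running a Riccati ODE argument past $\mu_{m-1}$. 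This comparison also explains the paper's prior reduction to $n=1$ (so that $\phi$ has no interior zeros and $X$ is globally defined), a reduction your proposal omits. Without these ingredients the chain of inequalities you describe cannot be closed.
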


Theorem \ref{annulus} and Theorem \ref{helicoid} follow immediately from Lemma \ref{knannulus} and Lemma \ref{knheli}, respectively.
These lemmas are special cases of Lemma \ref{kngen}.
The rest of the article is a proof of Lemma \ref{kngen}.
We remark that considering non-integer values of $k$ allows for more applications, including an analogue of Theorem \ref{annulus} in higher dimensions.

Note that the eigenvalues $\lambda_{k,n}$ satisfy the following monotonicity property.
If $\beta$ is a curve derived from $\alpha$ by restricting to a proper subinterval of $[0,L]$, then for every non-negative real number $k$ and every positive integer $n$,
\[
	\lambda_{k,n}(\alpha) < \lambda_{k,n}(\beta)
\]
Therefore, to prove Lemma \ref{kngen}, it suffices to consider the case where, for all $t$ in $(0,L)$,
\[
	r_2 < F_\alpha(t) < r_1
\]

\section{Reduction to a Special Case}

In this section, we begin the proof of Lemma \ref{kngen}.
We reduce the problem to the special case where $n=1$.
Then the special case $n=1$ is established in the next section.
The material in this section closely follows the argument in \cite{Adisc}.
We first extend the definition of the eigenvalues $\lambda_{k,n}$ to Lipschitz curves.

\begin{Definition}
\label{eiglip}
Let $[a,b]$ be a finite, closed interval and let
\[
	\gamma:[a,b] \to \overline{\R^2_+}
\]
be Lipschitz.
Assume that $\gamma(t)$ is in $\R^2_+$ for all $t$ in $[a,b)$.
Write
\[
	\gamma=(F_\gamma, G_\gamma)
\]
i.e. let $F_\gamma$ and $G_\gamma$ be the component functions of $\gamma$.
Let
\[
	V_\gamma=V \circ F_\gamma
\]
Let $\Lip_0(a,b)$ be the set of real-valued Lipschitz continuous functions on $[a,b]$ which vanish at the endpoints.
For a non-negative real number $k$ and a positive integer $n$, define
\[
	\lambda_{k,n}(\gamma) = \inf_W \max_{w \in W} \Bigg\{ \frac{ \int_a^b \frac{|w'|^2 V_\gamma}{|\gamma '|} + \frac{k^2 |w|^2 | \gamma' |}{V_\gamma} \,dt}{ \int_a^b |w|^2 V_\gamma | \gamma' | \,dt} \Bigg\}
\]
Here the infimum is taken over all $n$-dimensional subspaces $W$ of $\Lip_0(a,b)$.
\end{Definition}

The definition of the eigenvalues $\lambda_{k,n}(\gamma)$ depends on two functions, $V_\gamma$ and $|\gamma'|$ which derive from a curve $\gamma$.
It will be useful to extend the definition to functions $v$ and $\sigma$ which do not necessarily derive from a curve.

\begin{Definition}
\label{eigvsig}
Let $[a,b]$ be a finite interval, and let
\[
	v:[a,b] \to [0,\infty)
\]
and
\[
	\sigma:[a,b] \to [0,\infty)
\]
be functions in $L^\infty(a,b)$.
For a non-negative real number $k$ and a positive integer $n$, define
\[
	\lambda_{k,n}(v,\sigma) = \inf_W \max_{w \in W} \Bigg\{ \frac{ \int_a^b \frac{|w'|^2 v}{\sigma} + \frac{k^2 |w|^2 \sigma}{v} \,dt}{ \int_a^b |w|^2 v \sigma \,dt} \Bigg\}
\]
Here the infimum is taken over all $n$-dimensional subspaces $W$ of $\Lip_0(a,b)$.
\end{Definition}

For functions
\[
	v:[a,b] \to [0,\infty)
\]
and
\[
	\sigma:[a,b] \to [0,\infty)
\]
which are in $L^\infty(a,b)$, let $H_0^1(v,\sigma,k)$ be the set of continuous functions
\[
	w:[a,b] \to \R
\]
which vanish at the endpoints and have a weak derivative such that
\[
	\int_a^b \frac{|w'|^2 v}{\sigma} + \frac{k^2 w^2 \sigma}{v} \,dt < \infty
\]

\begin{Lemma}
\label{efex}
Let
\[
	v:[a,b] \to (0,\infty)
\]
and
\[
	\sigma:[a,b] \to (0,\infty)
\]
be functions in $L^\infty(a,b)$.
Assume that $1/v$ and $1/\sigma$ are in $L^\infty(a,b)$ as well.
Let $k$ be a non-negative real number.
Then there are functions
\[
	\phi_{k,1}, \phi_{k,2}, \phi_{k,3}, \ldots
\]
which form an orthonormal basis of $H_0^1(v,\sigma,k)$ such that, for any positive integer $n$,
\[
	\lambda_{k,n}(v,\sigma) = \frac{ \int_a^b \frac{|\phi_{k,n}'|^2 v}{\sigma} + \frac{k^2 |\phi_{k,n}|^2 \sigma}{v} \,dt}{ \int_a^b |\phi_{k,n}|^2 v \sigma \,dt}
\]
Each function $\phi_{k,n}$ has exactly $n-1$ roots in $(a,b)$ and satisfies the following equation weakly:
\[
	\bigg( \frac{v \phi_{k,n}'}{\sigma} \bigg)' = \frac{k^2 \sigma \phi_{k,n}}{v} - \lambda_{k,n}(\psi) v \sigma \phi_{k,n}
\]
Also
\[
	\lambda_{k,1}(v,\sigma) < \lambda_{k,2}(v,\sigma) < \lambda_{k,3}(v,\sigma) < \ldots
\]
\end{Lemma}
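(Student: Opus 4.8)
The plan is to treat the minimization in Definition \ref{eigvsig} as a regular Sturm--Liouville eigenvalue problem and to run the standard variational and oscillation-theoretic arguments, checking only that nothing is lost when the coefficients are merely bounded and measurable.

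First I would fix the functional-analytic setting. Write
\[
	Q(w) = \int_a^b \frac{|w'|^2 v}{\sigma} + \frac{k^2 |w|^2 \sigma}{v} \,dt,
	\qquad
	B(w) = \int_a^b |w|^2 v \sigma \,dt,
\]
so that the Rayleigh quotient in Definition \ref{eigvsig} is $Q(w)/B(w)$. Since $v$, $\sigma$, $1/v$, and $1/\sigma$ all lie in $L^\infty(a,b)$, finiteness of $Q(w)$ is equivalent to finiteness of $\int_a^b |w'|^2 \,dt$; hence $H_0^1(v,\sigma,k)$ coincides as a set with the usual $H_0^1(a,b)$, the form $Q$ is coercive there by the Poincar\'e inequality, and $B$ is comparable to the $L^2$ norm. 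Thus $(H_0^1(v,\sigma,k), Q)$ is a separable Hilbert space that embeds continuously and densely into the weighted space $(L^2(a,b), B)$, and by the Rellich--Kondrachov theorem this embedding is compact.

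Next I would invoke the spectral theorem. The bounded symmetric coercive form $Q$ together with the compact symmetric positive form $B$ produces, in the usual way, a $B$-orthonormal basis $\phi_{k,1}, \phi_{k,2}, \ldots$ of eigenfunctions with $Q(\phi_{k,n}) = \lambda_{k,n} B(\phi_{k,n})$ and $0 < \lambda_{k,1} \le \lambda_{k,2} \le \cdots \to \infty$, and the Courant--Fischer min-max principle identifies these numbers with $\lambda_{k,n}(v,\sigma)$ while exhibiting $\phi_{k,n}$ as a minimizer. Testing the weak critical-point identity $Q(\phi_{k,n},\psi) = \lambda_{k,n} B(\phi_{k,n},\psi)$ against arbitrary $\psi \in \Lip_0(a,b)$ gives
\[
	\int_a^b \frac{v \phi_{k,n}'}{\sigma}\, \psi' \,dt = \int_a^b \bigg( \lambda_{k,n} v\sigma - \frac{k^2 \sigma}{v} \bigg) \phi_{k,n}\, \psi \,dt,
\]
which is exactly the weak form of the stated equation.

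The remaining and hardest part is the nodal count, together with the strict inequalities $\lambda_{k,1} < \lambda_{k,2} < \cdots$. The weak equation is a Sturm--Liouville equation $-(p\phi')' + q\phi = \lambda w \phi$ with $p = v/\sigma$, $q = k^2\sigma/v$, and weight $w = v\sigma$, all bounded above and away from zero. For such Carath\'eodory coefficients the initial value problem for the pair $(\phi, p\phi')$ has a unique solution, so the solutions vanishing at $a$ form a one-dimensional space; since an eigenfunction must also vanish at $b$, each eigenvalue is simple, and the monotonicity sharpens to the strict chain claimed. For the count of interior zeros I expect the main obstacle, as it is the one fact not delivered by abstract spectral theory. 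I would establish it by the classical Pr\"ufer argument: set $p\phi' = \rho\cos\theta$ and $\phi = \rho\sin\theta$, observe that $\theta$ is absolutely continuous and crosses each multiple of $\pi$ transversally with increasing $\lambda$, and show that the value of $\theta$ at $b$ is strictly increasing in $\lambda$; the number of multiples of $\pi$ in the range of $\theta$ then identifies the interior zero count of the $n$th eigenfunction as $n-1$. The only care needed is to verify that the Pr\"ufer transformation and the monotone dependence on $\lambda$ survive under bounded measurable coefficients, which they do since the underlying ODE theory requires only integrability of $p$, $q$, and $w$.
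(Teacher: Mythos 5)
Your proposal is correct and is essentially the argument the paper relies on: the paper explicitly omits the proof as standard and cites Zettl and Gilbarg--Trudinger, and your sketch (variational setup with $L^\infty$ coefficients bounded away from zero, compactness and the spectral theorem, the weak Euler--Lagrange equation, simplicity via uniqueness for the initial value problem in the quasi-derivative, and the Pr\"ufer nodal count) is exactly that standard Sturm--Liouville theory.
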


We omit the proof of this lemma, which is standard.
For details, we refer to Zettl \cite[Theorem 10.12.1]{Z} and Gilbarg and Trudinger \cite[Section 8.12]{GT}.

We refer to the functions $\phi_{k,n}$ given by Lemma \ref{efex} as the eigenfunctions corresponding to $v$ and $\sigma$.
If there is a curve $\gamma$ such that $v=V_\gamma$ and $\sigma=|\gamma'|$, then we also refer to the functions $\phi_{k,n}$ as the eigenfunctions corresponding to $\gamma$.

Fix a non-negative real number $k$ and a positive integer $n$.
Let
\[
	\mu_0 = \frac{k}{\sqrt{\lambda_{k,n}(\omega)}}
\]
Let $\Phi=\Phi_{k,n}$ be the eigenfunction corresponding to $\omega$ given by Lemma \ref{efex}.

\begin{Lemma}
\label{muV}
If $z$ is a root of $\Phi$ in $[0,r_1-r_2)$, then
\[
	\mu_0 < V_\omega(z)
\]
In particular,
\[
	\mu_0 < V(r_1)
\]
\end{Lemma}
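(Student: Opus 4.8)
The plan is to exploit the second-order equation satisfied by $\Phi$ together with an energy (Rayleigh-quotient) argument. Since $\omega(t)=(r_1-t,0)$ moves only in the $x$-direction and $g$ has $g_{xx}\equiv 1$, the curve $\omega$ has unit speed, so $|\omega'|\equiv 1$. Thus the equation of Lemma \ref{efex} for the pair $(V_\omega,1)$ reads
\[
	(V_\omega \Phi')' = \frac{k^2}{V_\omega}\,\Phi - \lambda_{k,n}(\omega)\, V_\omega \Phi
	\qquad \text{on } [0,r_1-r_2],
\]
and since $V_\omega$ is smooth and bounded away from $0$ on this interval, $\Phi$ is a genuine $C^2$ solution. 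Writing $\lambda=\lambda_{k,n}(\omega)>0$ and $q=k^2/V_\omega-\lambda V_\omega$, a direct computation shows that, at any point where $V_\omega>0$,
\[
	\mu_0 < V_\omega \iff k^2 < \lambda V_\omega^2 \iff q<0.
\]
So the assertion $\mu_0<V_\omega(z)$ is exactly the statement that the zeroth-order coefficient $q$ is negative at the root $z$.

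First I would note that $V_\omega(z)>0$ at every root $z\in[0,r_1-r_2)$: indeed $F_\omega(z)=r_1-z>r_2\ge 0$ forces $F_\omega(z)>0$, and $V$ is positive on $(0,\infty)$. Then I would argue by contradiction, supposing $q(z)\ge 0$, equivalently $V_\omega(z)\le\mu_0$. The decisive structural input is the monotonicity coming from $V'>0$: since $F_\omega(t)=r_1-t$ decreases, the composite $V_\omega=V\circ F_\omega$ decreases, so $V_\omega(t)\le V_\omega(z)\le\mu_0$ for every $t$ in $[z,r_1-r_2]$, and the equivalence above then yields $q(t)\ge 0$ throughout $[z,r_1-r_2]$.

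Next I would carry out the energy argument on $[z,r_1-r_2]$. Both endpoints are roots of $\Phi$: the point $z$ by hypothesis, and $r_1-r_2$ because $\Phi\in\Lip_0(0,r_1-r_2)$. Multiplying the equation by $\Phi$ and integrating by parts, the boundary contributions vanish and I obtain
\[
	-\int_z^{r_1-r_2} V_\omega\, |\Phi'|^2 \,dt = \int_z^{r_1-r_2} q\, \Phi^2 \,dt .
\]
The left side is nonpositive while the right side is nonnegative, so both are zero; as $V_\omega>0$ this forces $\Phi'\equiv 0$ on $[z,r_1-r_2]$, and together with $\Phi(z)=0$ it forces $\Phi\equiv 0$ there. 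By uniqueness for the linear second-order equation, $\Phi$ would then vanish identically, contradicting that it is a (unit-norm) eigenfunction. Hence $q(z)<0$, i.e. $\mu_0<V_\omega(z)$. The final claim follows on taking $z=0$, which is always a root since $\Phi$ vanishes at the endpoint $0$; this gives $\mu_0<V_\omega(0)=V(r_1)$.

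I expect no serious obstacle: once the equation is in divergence form the manipulations are routine. The one point deserving care is the propagation of the sign condition $q\ge 0$ from the single root $z$ to the whole interval $[z,r_1-r_2]$, which rests entirely on the monotonicity of $V_\omega$ inherited from the hypothesis $V'>0$. It is precisely this monotonicity that lets the energy identity deliver a contradiction rather than a mere constraint, and it is the reason the hypothesis $V'>0$ enters the argument.
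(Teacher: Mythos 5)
Your proof is correct and follows essentially the same route as the paper: both arguments rest on the integration-by-parts (Rayleigh-quotient) identity for $\Phi$ over the subinterval $[z,r_1-r_2]$, whose endpoints are roots, combined with the monotonicity of $V_\omega$ inherited from $V'>0$. The paper phrases it directly (drop the gradient term and bound $1/V_\omega$ from below by $V_\omega/V_\omega(z)^2$) rather than by contradiction, but the content is identical.
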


\begin{proof}
Write $\omega=(F_\omega,G_\omega)$ and let $V_\omega=V \circ F_\omega$.
It follows from Lemma \ref{efex} that
\[
	\lambda_{k,n}(\omega) = \frac{ \int_z^{r_1-r_2} |\Phi'|^2 V_\omega + \frac{k^2 |\Phi|^2}{V_\omega} \,dt}{ \int_z^{r_1-r_2} |\Phi|^2 V_\omega \,dt}
\]
Since $\Phi$ is non-constant and $V$ is monotonic, this implies that
\[
	\lambda_{k,n}(\omega) > \frac{k^2}{[V_\omega(z)]^2}
\]
That is, $\mu_0 < V_\omega(z)$.
Note that the case $z=0$ yields $\mu_0 < V_\omega(r_1)$.
\end{proof}

Write $\alpha=(F_\alpha,G_\alpha)$, i.e. let $F_\alpha$ and $G_\alpha$ be the component functions of $\alpha$.
Let $V_\alpha=V \circ F_\alpha$.
If $\mu_0 < V(r_2)$, then define $P_0=L$.
Otherwise, define
\[
	P_0 = \min \bigg\{ t \in [0,L] : V_\alpha(t)= \mu_0 \bigg\}
\]
In light of Lemma \ref{muV}, the value $P_0$ is well-defined and positive.
Define
\[
	\beta_0:[0,L] \to \overline{\R^2_+}
\]
to be a piecewise smooth function such that
\[
	\beta_0(0)=(r_1,0)
\]
and
\[
	\beta_0'(t) =
	\begin{cases}
		(F_\alpha'(t), 0) & t \in [0,P_0) \\
		(F_\alpha'(t), G_\alpha'(t) & t \in (P_0,L] \\
	\end{cases}
\]
Note that $|\beta_0'| \le |\alpha'|$ over $[0,P_0)$, and $|\beta_0'|=|\alpha'|$ over $(P_0,L]$.

\begin{Lemma}
\label{a1}
Assume $\alpha$ is not equal to $\beta_0$ and $\lambda_{k,n}(\alpha) \ge \lambda_{k,n}(\omega)$.
Then
\[
	\lambda_{k,n}(\alpha) < \lambda_{k,n}(\beta_0)
\]
\end{Lemma}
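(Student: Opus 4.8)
The plan is to compare the Rayleigh quotients defining $\lambda_{k,n}(\alpha)$ and $\lambda_{k,n}(\beta_0)$ directly, using that the flattening changes the speed of the curve but not the weight $V$. First I would record two structural facts about $\beta_0$. Since $\beta_0'$ and $\alpha'$ have the same first component on all of $[0,L]$ and $\beta_0(0)=\alpha(0)=(r_1,0)$, the component functions satisfy $F_{\beta_0}=F_\alpha$, and hence $V_{\beta_0}=V_\alpha$. On the other hand $|\beta_0'|\le|\alpha'|=1$ on $[0,P_0)$ with equality on $(P_0,L]$, and because $\alpha\neq\beta_0$ the set $E\subset[0,P_0)$ on which $|\beta_0'|<1$ (equivalently $G_\alpha'\neq0$) has positive measure. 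Thus, writing $\sigma_0=|\beta_0'|$, the data $(V_\alpha,\sigma_0)$ of $\beta_0$ differ from the data $(V_\alpha,1)$ of $\alpha$ only through the speed $\sigma_0\le 1$.

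The pointwise sign input is the heart of the matter, and it is where the hypothesis and the choice of $P_0$ enter. By construction $V_\alpha(t)>\mu_0$ for every $t\in[0,P_0)$, since $V_\alpha(0)=V(r_1)>\mu_0$ by Lemma \ref{muV} and $P_0$ is the first time $V_\alpha$ meets $\mu_0$. Combining $\mu_0=k/\sqrt{\lambda_{k,n}(\omega)}$ with the assumption $\lambda_{k,n}(\alpha)\ge\lambda_{k,n}(\omega)$ yields $V_\alpha>\mu_0\ge k/\sqrt{\lambda_{k,n}(\alpha)}$ on $[0,P_0)$, so that
\[
\frac{k^2}{V_\alpha}-\lambda_{k,n}(\alpha)\,V_\alpha<0 \qquad\text{on } [0,P_0).
\]

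Next I would invoke the eigenfunctions $\psi_1,\dots,\psi_n$ of $\alpha$ from Lemma \ref{efex}, which applies since $\sigma_\alpha\equiv 1$ and $V_\alpha$ is bounded above and below by positive constants. Writing $N_\gamma(w)$ and $D_\gamma(w)$ for the numerator and denominator of the Rayleigh quotient $R_\gamma(w)$ of $\gamma$, the identities $V_{\beta_0}=V_\alpha$ and $\sigma_0\le 1$ give, with $\lambda=\lambda_{k,n}(\alpha)$,
\[
N_{\beta_0}(w)-\lambda D_{\beta_0}(w)=\bigl(N_\alpha(w)-\lambda D_\alpha(w)\bigr)+\int_0^{P_0}\Bigl(|w'|^2V_\alpha\bigl(\tfrac{1}{\sigma_0}-1\bigr)+|w|^2(\sigma_0-1)\bigl(\tfrac{k^2}{V_\alpha}-\lambda V_\alpha\bigr)\Bigr)\,dt.
\]
On $[0,P_0)$ the factor $\tfrac{1}{\sigma_0}-1\ge 0$, while $(\sigma_0-1)\le 0$ multiplies the negative quantity above, so the integrand is nonnegative; and if $w$ is $L^2(V_\alpha\,dt)$-orthogonal to $\psi_1,\dots,\psi_{n-1}$ then $N_\alpha(w)-\lambda D_\alpha(w)\ge 0$. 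Hence $R_{\beta_0}(w)\ge\lambda$ on this codimension-$(n-1)$ subspace. Since every $n$-dimensional test space for $\beta_0$ meets it nontrivially, $\lambda_{k,n}(\beta_0)\ge\lambda_{k,n}(\alpha)\ge\lambda_{k,n}(\omega)$.

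Finally, for the strict inequality I would run the symmetric comparison with the roles reversed, using the eigenfunctions of $\beta_0$ and the sign control now available from $\lambda_{k,n}(\beta_0)\ge\lambda_{k,n}(\omega)$, which makes $\tfrac{k^2}{V_\alpha}-\lambda_{k,n}(\beta_0)V_\alpha<0$ on $[0,P_0)$. In the resulting estimate the leading bracket is $\le 0$ on the span of the first $n$ eigenfunctions of $\beta_0$ and is strictly negative unless $w$ is a multiple of the top eigenfunction, by the strict ordering of eigenvalues in Lemma \ref{efex}; and for that exceptional $w$ the remaining integral is strictly negative on $E$ because the integrand there is strictly negative wherever the eigenfunction does not vanish, while by Lemma \ref{efex} it has only finitely many zeros and so cannot vanish on the positive-measure set $E$. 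This forces $R_\alpha(w)<\lambda_{k,n}(\beta_0)$ on an $n$-dimensional test space, hence $\lambda_{k,n}(\alpha)<\lambda_{k,n}(\beta_0)$. The hard part will be this last passage: it requires the spectral data of $\beta_0$ to exist and to obey Lemma \ref{efex} even though $\sigma_0=|\beta_0'|$ may fail to be bounded below where the meridian is vertical, so I expect to secure the attainment and the strict gap through the $(v,\sigma)$ framework of Definition \ref{eigvsig} together with an approximation or compactness argument.
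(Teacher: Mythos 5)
Your first half is sound and close in spirit to the paper's computation: the identity relating $N_{\beta_0}-\lambda D_{\beta_0}$ to $N_\alpha-\lambda D_\alpha$, together with the sign $\tfrac{k^2}{V_\alpha}-\lambda V_\alpha<0$ on $[0,P_0)$ (which is exactly how the hypothesis $\lambda_{k,n}(\alpha)\ge\lambda_{k,n}(\omega)$ and the definition of $P_0$ enter in the paper), correctly yields the non-strict inequality $\lambda_{k,n}(\beta_0)\ge\lambda_{k,n}(\alpha)$. But the strict inequality is the entire content of the lemma, and your plan for it has a genuine gap. The ``symmetric comparison'' needs the first $n$ eigenfunctions of $\beta_0$ to exist, to attain the min-max, and to have finitely many zeros. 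None of this is available: on $[0,P_0)$ one has $|\beta_0'|=|F_\alpha'|$, which can vanish on a set of positive measure (wherever the meridian of $\alpha$ is vertical), so $1/\sigma_0\notin L^\infty$ and Lemma \ref{efex} does not apply to $\beta_0$; the infimum in Definition \ref{eiglip} need not be attained. You flag this yourself, but ``an approximation or compactness argument'' does not close it: regularizing $\sigma_0$ and passing to the limit preserves only non-strict inequalities, which is precisely what you already have from the first half. (Note also that your first half cannot be upgraded to strictness for free: knowing $R_{\beta_0}(w)>\lambda_{k,n}(\alpha)$ for each individual $w$ in a codimension-$(n-1)$ subspace does not bound the infimum over test spaces away from $\lambda_{k,n}(\alpha)$ without a uniform gap.)

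The missing idea, which is the paper's actual device, is to place the strictness at a \emph{regular} interpolant rather than at the degenerate endpoint $\beta_0$. The paper introduces, for $p\in(0,1)$, the curve $\alpha_p$ with $\alpha_p'=(F_\alpha',pG_\alpha')$ on $[0,P_0)$ and $\alpha_p'=\alpha'$ afterwards. Since $\alpha_p$ is regular, Lemma \ref{efex} applies to it: one takes the optimal $n$-dimensional subspace $W_p$ for $\alpha_p$, picks $v\in W_p$ maximizing the $\alpha$-quotient, and runs your pointwise comparison to get $\lambda_{k,n}(\alpha)\le R_\alpha(v)\le R_{\alpha_p}(v)\le\lambda_{k,n}(\alpha_p)$; equality throughout would force $v$ to vanish on a set of positive measure, contradicting that only multiples of the $n$-th eigenfunction of $\alpha_p$ (which has $n-1$ zeros) attain the max over $W_p$. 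This yields the strict inequality $\lambda_{k,n}(\alpha)<\lambda_{k,n}(\alpha_p)$. The passage from $\alpha_p$ to $\beta_0$ then only needs to be non-strict, and is done with $\epsilon$-almost-optimal subspaces for $\beta_0$, so no spectral data for the degenerate curve is ever required. Without this (or an equivalent) intermediary, your argument proves only $\lambda_{k,n}(\beta_0)\ge\lambda_{k,n}(\alpha)$.
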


\begin{proof}
Fix a number $p$ in $(0,1)$.
Define
\[
	\alpha_p:[0,L] \to \overline{\R^2_+}
\]
to be a regular piecewise smooth curve such that
\[
	\alpha_p(0)=(r_1,0)
\]
and
\[
	\alpha_p'(t) =
	\begin{cases}
		(F_\alpha'(t), p G_\alpha'(t)) & t \in [0,P_0) \\
		(F_\alpha'(t), G_\alpha'(t) & t \in (P_0,L] \\
	\end{cases}
\]
We first show that
\[
	\lambda_{k,n}(\alpha) < \lambda_{k,n}(\alpha_p)
\]
By Lemma \ref{efex}, there is an $n$-dimensional subspace $W_p$ of $\Lip_0(0,L)$ such that
\[
	\lambda_{k,n}(\alpha_p) = \max_{w \in W_p} \frac{\int_0^L \frac{|w'|^2 V_{\alpha}}{|\alpha_p'|} + \frac{k^2 |w|^2 | \alpha_p'|}{V_{\alpha}} \,dt}{\int_0^L |w|^2 V_\alpha |\alpha_p'| \,dt}
\]
Moreover, the maximum over $W_p$ is only attained by scalar multiples of a function $\phi_{k,n}$ which has exactly $n-1$ roots in $(0,L)$.
Let $v$ be a function in $W_p$ such that
\[
	\frac{\int_0^L \frac{|v'|^2 V_{\alpha}}{|\alpha'|} + \frac{k^2 |v|^2 | \alpha'|}{V_{\alpha}} \,dt}{\int_0^L |v|^2 V_\alpha |\alpha'| \,dt}	
	= \max_{w \in W_p} \frac{\int_0^L \frac{|w'|^2 V_{\alpha}}{|\alpha'|} + \frac{k^2 |w|^2 | \alpha'|}{V_{\alpha}} \,dt}{\int_0^L |w|^2 V_\alpha |\alpha'| \,dt}
\]
Note this quantity is at least $\lambda_{k,n}(\alpha)$, which is at least $\lambda_{k,n}(\omega)$.
It follows that
\[
	\frac{\int_0^L \frac{|v'|^2 V_{\alpha}}{|\alpha'|} + \frac{k^2 |v|^2 | \alpha'|}{V_{\alpha}} \,dt}{\int_0^L |v|^2 V_\alpha |\alpha'| \,dt}	
	\le \frac{\int_0^L \frac{|v'|^2 V_{\alpha}}{|\alpha_p'|} + \frac{k^2 |v|^2 | \alpha_p'|}{V_{\alpha}} \,dt}{\int_0^L |v|^2 V_\alpha |\alpha_p'| \,dt}
\]
Moreover, if equality holds, then $v$ must vanish on a set of positive measure.
In this case, $v$ is not a multiple of $\phi_{k,n}$, so
\[
	\frac{\int_0^L \frac{|v'|^2 V_{\alpha}}{|\alpha_p'|} + \frac{k^2 |v|^2 | \alpha_p'|}{V_{\alpha}} \,dt}{\int_0^L |v|^2 V_\alpha |\alpha_p'| \,dt}
	< \lambda_{k,n}(\alpha_p)
\]
In either case, we obtain
\[
	\lambda_{k,n}(\alpha) \le \frac{\int_0^L \frac{|v'|^2 V_{\alpha}}{|\alpha'|} + \frac{k^2 |v|^2 | \alpha'|}{V_{\alpha}} \,dt}{\int_0^L |v|^2 V_\alpha |\alpha'| \,dt} < \lambda_{k,n}(\alpha_p)
\]

Now we repeat the argument to show that
\[
	\lambda_{k,n}(\alpha_p) \le \lambda_{k,n}(\beta_0)
\]
Let $\epsilon>0$.
There is an $n$-dimensional subspace $W$ of $\Lip_0(0,L)$ such that
\[
	 \max_{w \in W} \frac{\int_0^L \frac{|w'|^2 V_{\alpha}}{|\beta_0'|} + \frac{k^2 |w|^2 | \beta_0'|}{V_{\alpha}} \,dt}{\int_0^L |w|^2 V_\alpha |\beta_0'| \,dt}
	 < \lambda_{k,n}(\beta_0) + \epsilon
\]
Let $u$ be a function in $W$ such that
\[
	\frac{\int_0^L \frac{|u'|^2 V_{\alpha}}{|\alpha_p'|} + \frac{k^2 |u|^2 | \alpha_p'|}{V_{\alpha}} \,dt}{\int_0^L |u|^2 V_\alpha |\alpha_p'| \,dt}	
	= \max_{w \in W} \frac{\int_0^L \frac{|w'|^2 V_{\alpha}}{|\alpha_p'|} + \frac{k^2 |w|^2 | \alpha_p'|}{V_{\alpha}} \,dt}{\int_0^L |w|^2 V_\alpha |\alpha_p'| \,dt}
\]
Note this quantity is at least $\lambda_{k,n}(\alpha_p)$, which is at least $\lambda_{k,n}(\omega)$.
As above, it follows that
\[
	\frac{\int_0^L \frac{|u'|^2 V_{\alpha}}{|\alpha_p'|} + \frac{k^2 |u|^2 | \alpha_p'|}{V_{\alpha}} \,dt}{\int_0^L |u|^2 V_\alpha |\alpha_p'| \,dt}
	\le \frac{\int_0^L \frac{|u'|^2 V_{\alpha}}{|\beta_0'|} + \frac{k^2 |u|^2 | \beta_0'|}{V_{\alpha}} \,dt}{\int_0^L |u|^2 V_\alpha |\beta_0'| \,dt}
\]
Now we obtain
\[
	\lambda_{k,n}(\alpha_p) \le \lambda_{k,n}(\beta_0) + \epsilon
\]
Therefore,
\[
	\lambda_{k,n}(\alpha) < \lambda_{k,n}(\beta_0)
\]

\end{proof}

Write $\beta_0=(F_{\beta_0},G_{\beta_0})$.
Define
\[
	F_{\gamma_0}:[0,L] \to [0,\infty)
\]
by
\[
	F_{\gamma_0}(t) =
	\begin{cases}
		\min \{ F_{\beta_0}(s) : s \in [0,t] \} & t \in [0,P_0] \\
		F_{\beta_0}(t) & t \in [P_0,L] \\
	\end{cases}
\]
Let $G_{\gamma_0}=G_{\beta_0}$ and define
\[
	\gamma_0=(F_{\gamma_0}, G_{\gamma_0})
\]
Note that
\[
	\gamma_0:[0,L] \to \overline{\R^2_+}
\]
is Lipschitz.

\begin{Lemma}
\label{12}
Assume $\lambda_{k,n}(\beta_0) \ge \lambda_{k,n}(\omega)$.
Then
\[
	\lambda_{k,n}(\beta_0) \le \lambda_{k,n}(\gamma_0)
\]
\end{Lemma}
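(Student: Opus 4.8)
The plan is to bound $\lambda_{k,n}(\beta_0)$ from above by transplanting a near-optimal subspace for $\gamma_0$ into the variational problem for $\beta_0$, and then to read off the stated inequality from the special role played by the threshold $\mu_0$. First I would record the geometry of the construction. Since $P_0$ is the first time the function $V_{\beta_0}=V\circ F_{\beta_0}$ reaches $\mu_0$ and $V$ is increasing, the value $F_{\beta_0}(P_0)=V^{-1}(\mu_0)$ is the strict minimum of $F_{\beta_0}$ over $[0,P_0]$; hence the running minimum $F_{\gamma_0}$ is continuous and nonincreasing on $[0,P_0]$, and $\gamma_0$ agrees with $\beta_0$ on $[P_0,L]$. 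I would decompose $[0,P_0]=D\cup U$, where $D=\{t:F_{\beta_0}(t)=F_{\gamma_0}(t)\}$ and $U$ is the union of the ``excursions'' on which $F_{\gamma_0}$ is locally constant. On $D$ the two curves coincide, so $V_{\gamma_0}=V_{\beta_0}$ and $|\gamma_0'|=|\beta_0'|$ almost everywhere; on $U$ one has $|\gamma_0'|=0$ while $F_{\beta_0}>F_{\gamma_0}\ge F_{\beta_0}(P_0)$, so $V_{\beta_0}>\mu_0$ there.

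For the comparison, given $\epsilon>0$ the definition of $\lambda_{k,n}(\gamma_0)$ as an infimum (Definition \ref{eiglip}) provides an $n$-dimensional subspace $W\subset\Lip_0(0,L)$ with $\max_{w\in W}R_{\gamma_0}(w)\le\lambda_{k,n}(\gamma_0)+\epsilon$, where $R_{\gamma_0}$ denotes the Rayleigh quotient attached to $\gamma_0$. Finiteness of each $R_{\gamma_0}(w)$ forces $w$ to be constant on every component of $U$. I would then use these same functions as test functions for $\beta_0$. The crucial bookkeeping is that, because $w'=0$ on $U$ and the two curves agree on $D\cup[P_0,L]$, the gradient integral $\int (w')^2V_{\beta_0}/|\beta_0'|$ is unchanged, whereas the potential integral and the mass integral each exceed their $\gamma_0$ counterparts by the nonnegative quantities $\delta_B=\int_U k^2w^2|\beta_0'|/V_{\beta_0}\,dt$ and $\delta_M=\int_U w^2V_{\beta_0}|\beta_0'|\,dt$.

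The heart of the matter is then a short estimate. Writing $R_{\beta_0}(w)$ as the mediant of $R_{\gamma_0}(w)$ and $\delta_B/\delta_M$ gives $R_{\beta_0}(w)\le\max\{R_{\gamma_0}(w),\,\delta_B/\delta_M\}$. Since $V_{\beta_0}>\mu_0$ almost everywhere on $U$, the weighted average $\delta_B/\delta_M$ of $k^2/V_{\beta_0}^2$ against $w^2V_{\beta_0}|\beta_0'|$ is strictly less than $k^2/\mu_0^2=\lambda_{k,n}(\omega)$; this is exactly where the definition of $\mu_0$, the choice of $P_0$, and Lemma \ref{muV} enter. Taking the maximum over $W$ and letting $\epsilon\to0$ yields $\lambda_{k,n}(\beta_0)\le\max\{\lambda_{k,n}(\gamma_0),\lambda_{k,n}(\omega)\}$. (When $k=0$ this step is trivial, since then $\delta_B=0$ and $R_{\beta_0}(w)\le R_{\gamma_0}(w)$ outright.)

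Finally I would extract the stated inequality using the hypothesis $\lambda_{k,n}(\beta_0)\ge\lambda_{k,n}(\omega)$. If $\lambda_{k,n}(\beta_0)>\lambda_{k,n}(\gamma_0)$, the bound above forces the maximum to equal $\lambda_{k,n}(\omega)$ and hence $\lambda_{k,n}(\beta_0)=\lambda_{k,n}(\omega)$ and $\lambda_{k,n}(\gamma_0)<\lambda_{k,n}(\omega)$. But in that situation, for $\epsilon$ small, every nonzero $w\in W$ has $R_{\gamma_0}(w)\le\lambda_{k,n}(\gamma_0)+\epsilon<\lambda_{k,n}(\omega)$ and $\delta_B/\delta_M<\lambda_{k,n}(\omega)$, so the mediant $R_{\beta_0}(w)$ is strictly below $\lambda_{k,n}(\omega)$; by compactness of the unit sphere of the finite-dimensional space $W$ this gives $\max_{w\in W}R_{\beta_0}(w)<\lambda_{k,n}(\omega)$, contradicting $\lambda_{k,n}(\beta_0)=\lambda_{k,n}(\omega)$. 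Therefore $\lambda_{k,n}(\beta_0)\le\lambda_{k,n}(\gamma_0)$. I expect the main obstacle to be the careful treatment of the pause intervals: verifying that the transplanted functions are admissible, that the gradient integral is genuinely preserved, and that the strict inequality $\delta_B/\delta_M<\lambda_{k,n}(\omega)$ and the positivity of the mass $\int w^2V_{\beta_0}|\beta_0'|$ survive at the degenerate points where $|\beta_0'|$ or $|\gamma_0'|$ vanishes, so that all of the above remains valid as almost-everywhere statements.
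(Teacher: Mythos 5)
Your argument is correct and matches the paper's proof in all essentials: both transplant test functions from the $\gamma_0$ problem to the $\beta_0$ problem, observe that finiteness of the Rayleigh quotient forces them to be constant on the excursion intervals where $F_{\gamma_0}$ is locally constant (the paper invokes the Riesz sunrise lemma for this decomposition), and bound the excursion contribution by $\lambda_{k,n}(\omega)$ using $V_{\beta_0}\ge\mu_0$ on $[0,P_0)$ together with $\mu_0=k/\sqrt{\lambda_{k,n}(\omega)}$. The paper frames this as a proof by contradiction rather than your direct mediant estimate followed by a case analysis, but the underlying decomposition and the key role of $\mu_0$ are identical.
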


\begin{proof}
Define
\[
	I = \bigg\{ t \in [0,P_0] : F_{\beta_0}(t) \neq F_{\gamma_0}(t) \bigg\}
\]
By the Riesz sunrise lemma, there are disjoint open intervals $I_1, I_2, I_3, \ldots$ such that
\[
	I = \bigcup_j I_j
\]
and $F_{\gamma_0}$ is constant over each interval.
Suppose
\[
	\lambda_{k,n}(\beta_0) > \lambda_{k,n}(\gamma_0)
\]
Then there is an $n$-dimensional subspace $W$ of $\Lip_0(0,L)$ such that
\[
	\max_{w \in W} \frac{\int_0^L \frac{|w'|^2 V_{\gamma_0}}{|\gamma_0'|} + \frac{k^2 |w|^2 | \gamma_0'|}{V_{\gamma_0}} \,dt}{\int_0^L |w|^2 V_{\gamma_0} |\gamma_0'| \,dt} < \lambda_{k,n}(\beta_0)
\]
Note that $\gamma_0'$ is zero over the intervals $I_1, I_2, I_3, \ldots$, so every function $w$ in $W$ is constant over each of these intervals.
Let
\[
	J=[0,L] \setminus I
\]
The isolated points of $J$ are countable, so at almost every point in $J$, the curve $\gamma_0$ is differentiable with $\gamma_0'=\beta_0'$.
If $w$ is a non-zero function in $W$, then $w$ cannot vanish identically on $J$, and
\[
	\frac{\int_J \frac{|w'|^2 V_{\beta_0}}{|\beta_0'|} + \frac{k^2 |w|^2 | \beta_0'|}{V_{\beta_0}} \,dt}{\int_J |w|^2 V_{\beta_0} |\beta_0'| \,dt}
	= \frac{\int_0^L \frac{|w'|^2 V_{\gamma_0}}{|\gamma_0'|} + \frac{k^2 |w|^2 | \gamma_0'|}{V_{\gamma_0}} \,dt}{\int_0^L |w|^2 V_{\gamma_0} |\gamma_0'| \,dt} < \lambda_{k,n}(\beta_0)
\]
Also for every $w$ in $W$,
\[
\begin{split}
	\int_I \frac{|w'|^2 V_{\beta_0}}{|\beta_0'|} + \frac{k^2 |w|^2 | \beta_0'|}{V_{\beta_0}} \,dt
	&= \int_I \frac{k^2 |w|^2 | \beta_0'|}{V_{\beta_0}} \,dt \\
	& \le \lambda_{k,n}(\omega) \int_I |w|^2 V_{\beta_0} | \beta_0'| \,dt \\
\end{split} 
\]
It follows that
\[
	\max_{w \in W} \frac{\int_0^L \frac{|w'|^2 V_{\beta_0}}{|\beta_0'|} + \frac{k^2 |w|^2 | \beta_0'|}{V_{\beta_0}} \,dt}{\int_0^L |w|^2 V_{\beta_0} |\beta_0'| \,dt} < \lambda_{k,n}(\beta_0)
\]
This is a contradiction.
\end{proof}

Let $L_0$ be the length of $\gamma_0$.
Define $\ell_0:[0,L] \to [0,L_0]$ by
\[
	\ell_0(t) = \int_0^t |\gamma'(u)| \,du
\]
Define
\[
	\rho_0:[0,L_0] \to [0,L]
\]
by
\[
	\rho_0(s) = \min \Big\{ t \in [0,L] : \ell_0(t) = s \Big\}
\]
The function $\rho_0$ need not be continuous, but the curve
\[
	\chi_0=\gamma_0 \circ \rho_0
\]
is piecewise smooth, and for all $t$ in $[0,L]$,
\[
	\chi_0(\ell_0(t)) = \gamma_0(t)
\]
Moreover $\chi_0$ is parametrized by arc length.
Note that for all $t$ in $[0,V_\omega^{-1}(\mu_0)]$,
\[
	\chi_0(t)=\omega(t)
\]

\begin{Lemma}
\label{23}
This reparametrization satisfies
\[
	\lambda_{k,n}(\gamma_0) \le \lambda_{k,n}(\chi_0)
\]
\end{Lemma}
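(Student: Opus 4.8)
The plan is to produce, for each competing subspace in the definition of $\lambda_{k,n}(\chi_0)$, a subspace of the same dimension competing in the definition of $\lambda_{k,n}(\gamma_0)$ with an identical maximal Rayleigh quotient. Since $\chi_0$ is the arc length reparametrization of $\gamma_0$ determined by $\ell_0$, the natural device is to pull functions back along $\ell_0$. Concretely, for $w$ in $\Lip_0(0,L_0)$, set $\tilde w = w \circ \ell_0$. Because $\ell_0$ is Lipschitz (as $|\gamma_0'| \in L^\infty(0,L)$) and $w$ is Lipschitz, $\tilde w$ lies in $\Lip_0(0,L)$; and since $\ell_0$ maps $[0,L]$ onto $[0,L_0]$, the linear map $w \mapsto \tilde w$ is injective, so it carries any $n$-dimensional subspace $W \subseteq \Lip_0(0,L_0)$ to an $n$-dimensional subspace $\tilde W \subseteq \Lip_0(0,L)$.

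The heart of the argument is that this pullback preserves every Rayleigh quotient. Two identities drive this. First, since $\chi_0(\ell_0(t)) = \gamma_0(t)$ for all $t$, the first components agree and hence $V_{\gamma_0}(t) = V_{\chi_0}(\ell_0(t))$. Second, $\ell_0$ is absolutely continuous with $\ell_0' = |\gamma_0'|$ almost everywhere, and the chain rule for a Lipschitz function composed with a monotone absolutely continuous function gives $\tilde w'(t) = w'(\ell_0(t))\,|\gamma_0'(t)|$ for almost every $t$, the right-hand side being read as $0$ wherever $|\gamma_0'|$ vanishes. Substituting these into the integrand for $\gamma_0$, the factor $|\gamma_0'|$ in the denominator of the gradient term cancels one power of $|\gamma_0'|^2$ from $|\tilde w'|^2$, so that each of the three integrands acquires a single factor $|\gamma_0'|\,dt = \ell_0'\,dt$. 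The change-of-variables formula $\int_0^L (g \circ \ell_0)\,\ell_0'\,dt = \int_0^{L_0} g\,ds$, valid for nonnegative $g$ because $\ell_0$ is absolutely continuous and nondecreasing, then converts both the numerator and the denominator of the Rayleigh quotient of $\tilde w$ against $\gamma_0$ into the corresponding integrals for $w$ against $\chi_0$, where $|\chi_0'| = 1$. Thus the Rayleigh quotient of $\tilde w$ for $\gamma_0$ equals that of $w$ for $\chi_0$.

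It follows that the maximum of the $\gamma_0$-quotient over $\tilde W$ equals the maximum of the $\chi_0$-quotient over $W$, for every such $W$. Since the subspaces $\tilde W$ form a subfamily of all $n$-dimensional subspaces of $\Lip_0(0,L)$, taking the infimum over $W$ yields $\lambda_{k,n}(\gamma_0) \le \lambda_{k,n}(\chi_0)$, as claimed.

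The main obstacle is purely measure-theoretic: justifying the two identities on the set where $\gamma_0$ has vanishing speed, i.e.\ where $\ell_0$ is locally constant. There the gradient term $|\tilde w'|^2 V_{\gamma_0}/|\gamma_0'|$ is an indeterminate $0/0$, and one must confirm both that $\tilde w = w \circ \ell_0$ is constant wherever $\ell_0$ is (so that $\tilde w' = 0$ and the term is read as $0$, consistent with the finiteness convention implicit in Definition \ref{eiglip}), and that such a set contributes nothing to the transformed integrals, which holds because $\ell_0' = |\gamma_0'| = 0$ there. Granting the standard chain-rule and change-of-variables statements for absolutely continuous functions, the remaining computation is routine.
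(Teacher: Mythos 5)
Your proposal is correct and follows essentially the same route as the paper: pull functions back along $\ell_0$ via $w \mapsto w \circ \ell_0$, verify that the change of variables preserves each Rayleigh quotient, and conclude by comparing the infima over subspaces. You supply more detail than the paper on two points it leaves implicit --- the injectivity of the pullback (needed so that $n$-dimensional subspaces map to $n$-dimensional subspaces) and the treatment of the set where $|\gamma_0'|$ vanishes --- but these are elaborations of the same argument, not a different one.
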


\begin{proof}
Write $\gamma_0=(F_{\gamma_0}, G_{\gamma_0})$ and $\chi_0=(F_{\chi_0}, G_{\chi_0})$.
Let $V_{\gamma_0}=V \circ F_{\gamma_0}$ and $V_{\chi_0} = V \circ F_{\chi_0}$.
Let $w$ be a function in $\Lip_0(0,L_0)$ such that
\[
	\frac{\int_0^{L_0} \frac{|w'|^2 V_{\chi_0}}{|\chi_0'|} + \frac{k^2 |w|^2 | \chi_0'|}{V_{\chi_0}} \,dt}{\int_0^{L_0} |w|^2 V_{\chi_0} |\chi_0'| \,dt} < \infty
\]
Define $v=w \circ \ell_0$.
Then $v$ is in $\Lip_0(0,L)$, and changing variables yields
\[
	\frac{\int_0^L \frac{|v'|^2 V_{\gamma_0}}{|\gamma_0'|} + \frac{k^2 |v|^2 | \gamma_0'|}{V_{\gamma_0}} \,dt}{\int_0^L |v|^2 V_{\gamma_0} |\gamma_0'| \,dt}
	= \frac{\int_0^{L_0} \frac{|w'|^2 V_{\chi_0}}{|\chi_0'|} + \frac{k^2 |w|^2 | \chi_0'|}{V_{\chi_0}} \,dt}{\int_0^{L_0} |w|^2 V_{\chi_0} |\chi_0'| \,dt}
\]
It follows that
\[
	\lambda_{k,n}(\gamma_0) \le \lambda_{k,n}(\chi_0)
\]
\end{proof}

Note that if $\chi_0=\omega$, then Lemma \ref{kngen} now follows immediately.
In particular, this holds if $\mu_0 \le V(r_2)$.

\begin{proof}[Proof of Lemma 2.3 for the case $\chi_0=\omega$]
Suppose $\alpha$ is not equal to $\omega$ and
\[
	\lambda_{k,n}(\alpha) \ge \lambda_{k,n}(\omega)
\]
Then
 $\alpha$ is not equal to $\beta_0$, so by Lemmas \ref{a1}, \ref{12}, and \ref{23},
\[
	\lambda_{k,n}(\alpha) < \lambda_{k,n}(\beta_0) \le \lambda_{k,n}(\gamma_0)  \le \lambda_{k,n}(\chi_0) = \lambda_{k,n}(\omega)
\]
\end{proof}

We conclude this section by showing that, in order to prove Lemma \ref{kngen}, it suffices to consider the case $n=1$.

\begin{proof}[Proof of Lemma 2.3, assuming the case $n=1$ holds]
Note that we may also assume that $\chi_0 \neq \omega$.
Suppose that
\[
	\lambda_{k,n}(\alpha) \ge \lambda_{k,n}(\omega)
\]
By Lemmas \ref{a1}, \ref{12}, and \ref{23},
\[
	\lambda_{k,n}(\alpha) \le \lambda_{k,n}(\chi_0)
\]
Let $\Phi=\Phi_{k,n}$ be the eigenfunction given by Lemma \ref{efex} corresponding to the curve $\omega$.
Let $z_0$ be the largest root of $\Phi$ in $[0,r_1-r_2)$.
By Lemma \ref{muV},
\[
	z_0 < V_\omega^{-1}(\mu_0)
\]
Let $\phi=\phi_{k,n}$ be the eigenfunction given by Lemma \ref{efex} corresponding to the curve $\chi_0$.
Let $\tau_0$ be the largest root of $\phi$.
Note that $\chi_0$ agrees with $\omega$ over $[0, V_\omega^{-1}(\mu_0)]$, and
\[
	\lambda_{k,n}(\chi_0) \ge \lambda_{k,n}(\omega)
\]
By the Sturm comparison theorem,
\[
	\tau_0 \le z_0
\]
Define curves $\alpha_0$ and $\omega_0$ by
\[
	\alpha_0 = \chi_0 \Big|_{[\tau_0,L]}
\]
and
\[
	\omega_0 = \omega \Big|_{[\tau_0,L]}
\]
It follows from Lemma \ref{efex} that
\[
	\lambda_{k,n}(\chi_0) = \lambda_{k,1}(\alpha_0)
\]
Also
\[
	\lambda_{k,n}(\omega) \ge \lambda_{k,1}(\omega_0)
\]
We have $\alpha_0 \neq \omega_0$, because we are assuming that $\chi_0 \neq \omega$.
By the assumption that Lemma \ref{kngen} holds for the case $n=1$,
\[
	\lambda_{k,1}(\alpha_0) < \lambda_{k,1}(\omega_0)
\]
Now,
\[
	\lambda_{k,n}(\alpha) \le \lambda_{k,n}(\chi_0) = \lambda_{k,1}(\alpha_0) < \lambda_{k,1}(\omega_0) \le \lambda_{k,n}(\omega) 
\]
\end{proof}

\section{Proof of the Special Case}

In this section we prove Lemma \ref{kngen} for the case $n=1$.
We also assume $\chi_0 \neq \omega$.
This implies that
\[
	\mu_0 > V(r_2)
\]
Let $\Phi=\Phi_{k,1}$ be the eigenfunction given by Lemma \ref{efex} corresponding to the curve $\omega$.
Write $\omega=(F_\omega, G_\omega)$ and let $V_\omega=V \circ F_\omega$.
Note that $V_\omega'$ is negative over $[0,r_1-r_2)$.
Define a function
\[
	Y:(0,r_1-r_2) \to \R
\]
by
\[
	Y(t) = \frac{V_\omega \Phi'}{\Phi}
\]

\begin{Lemma}
\label{Y}
Over $(0, r_1-r_2)$, we have $Y' < 0$ and
\[
	Y^2 > k^2 - \lambda_{k,1}(\omega) V_\omega^2
\]
Moreover, $V_\omega^{-1}(\mu_0)$ is in $(0, r_1-r_2)$, and $Y < 0$ over $[V_\omega^{-1}(\mu_0), r_1-r_2)$.
\end{Lemma}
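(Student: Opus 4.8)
The plan is to derive a Riccati equation for $Y$ and analyze it by a barrier argument. Write $\lambda = \lambda_{k,1}(\omega)$. Since $n=1$, the eigenfunction $\Phi$ has no interior roots by Lemma~\ref{efex}, so we may take $\Phi > 0$ on $(0, r_1 - r_2)$; then $Y$ is smooth there. Because $\omega$ is parametrized by arc length we have $|\omega'| \equiv 1$, so the equation in Lemma~\ref{efex} reads $(V_\omega \Phi')' = k^2 \Phi / V_\omega - \lambda V_\omega \Phi$. Differentiating $Y = V_\omega \Phi'/\Phi$ and substituting this relation gives the Riccati identity
\[
	Y' = \frac{1}{V_\omega} \big( k^2 - \lambda V_\omega^2 - Y^2 \big).
\]
Set $Q = k^2 - \lambda V_\omega^2$. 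Since $V_\omega > 0$ throughout, the inequality $Y' < 0$ is \emph{equivalent} to $Y^2 > Q$; this collapses the first two assertions into one, so it suffices to prove $Y' < 0$ on $(0, r_1 - r_2)$ together with the sign and location claims.

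Next I would locate the transition point. Using $\mu_0 = k/\sqrt{\lambda}$, the bound $\mu_0 < V(r_1)$ from Lemma~\ref{muV}, and the assumption (valid in this special case) that $\mu_0 > V(r_2)$, the strict monotonicity of $V_\omega$ places $t_* := V_\omega^{-1}(\mu_0)$ in $(0, r_1 - r_2)$. The same monotonicity gives $Q < 0$ on $(0, t_*)$, $Q(t_*) = 0$, and $Q > 0$ on $(t_*, r_1 - r_2)$; moreover $Q' = -2\lambda V_\omega V_\omega' > 0$ since $V_\omega' < 0$. On $(0, t_*)$ the inequality $Y^2 > Q$ is automatic because $Q < 0 \le Y^2$, so $Y' < 0$ there with no further work.

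The substance is the region $(t_*, r_1 - r_2)$, where $Q > 0$ and the automatic bound fails. Here I would compare $Y$ with the barrier $-\sqrt{Q}$ and show $Y < -\sqrt{Q}$. The boundary value $\Phi(r_1 - r_2) = 0$ with $\Phi > 0$ inside forces $\Phi'(r_1 - r_2) < 0$ (a regular endpoint, as $V(r_2)>0$), so $Y \to -\infty$ there while $\sqrt{Q}$ stays bounded; thus $g := Y + \sqrt{Q} < 0$ near $r_1 - r_2$. If $g \ge 0$ somewhere in $(t_*, r_1 - r_2)$, let $t_1$ be the largest zero of $g$; then $g(t_1) = 0$ and $g < 0$ just to the right, so $g'(t_1) \le 0$. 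But at $t_1$ one has $Y(t_1)^2 = Q(t_1)$, whence $Y'(t_1) = 0$ from the Riccati identity, while $(\sqrt{Q})'(t_1) = Q'(t_1)/(2\sqrt{Q(t_1)}) > 0$; so $g'(t_1) > 0$, a contradiction. Hence $Y < -\sqrt{Q} \le 0$ on $(t_*, r_1 - r_2)$, giving both $Y < 0$ and $Y' < 0$ there. The main obstacle is exactly this step: in the $Q > 0$ regime nothing prevents $Y$ from rising to meet the barrier except the monotonicity $Q' > 0$, and the comparison must be run inward from the right endpoint rather than the left.

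Finally I would settle the endpoint $t_*$ itself. Continuity and $Y < -\sqrt{Q}$ on the right give $Y(t_*) \le 0$. To exclude $Y(t_*) = 0$, note that this would force $Y'(t_*) = 0$ by the Riccati identity, so along $g = Y + \sqrt{Q}$ on $(t_*, r_1 - r_2)$ we would have $g \to 0$ as $t \to t_*^+$ while $g'(t) = Y'(t) + Q'(t)/(2\sqrt{Q(t)}) \to +\infty$ (the first term tends to $0$, the second blows up since $Q(t_*) = 0$ and $Q'(t_*) > 0$). An increasing function on a right-neighborhood of $t_*$ with right-limit $0$ is non-negative there, contradicting $g < 0$. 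Therefore $Y(t_*) < 0$, so $Y'(t_*) = -Y(t_*)^2/V_\omega(t_*) < 0$, which completes $Y' < 0$ on all of $(0, r_1 - r_2)$ and $Y < 0$ on $[t_*, r_1 - r_2)$.
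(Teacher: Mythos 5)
Your proof is correct, but it takes a genuinely different route from the paper's. Both start from the same Riccati identity $Y' = \big(k^2 - \lambda_{k,1}(\omega)V_\omega^2 - Y^2\big)/V_\omega$, but the paper then proves the two inequalities by two separate arguments: it rules out critical points of $Y$ by computing $Y''$ at any such point and finding it positive (so every critical point would be a strict local minimum, incompatible with $Y \to -\infty$ at the right endpoint), and it proves $Y^2 > k^2 - \lambda_{k,1}(\omega)V_\omega^2$ by exhibiting the quantity $E = \lambda_{k,1}(\omega)V_\omega^2\Phi^2 + V_\omega^2(\Phi')^2 - k^2\Phi^2$ (which equals $(Y^2 - Q)\Phi^2$ in your notation) as a decreasing function that is positive at the endpoints. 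You instead observe that the two assertions are equivalent given the Riccati identity --- a small but genuine insight the paper does not exploit, since it effectively proves the same inequality twice by different methods --- and then establish the nontrivial half via a barrier comparison $Y < -\sqrt{Q}$ on the region where $Q > 0$, propagated inward from the right endpoint, with a separate limiting argument at the turning point $t_* = V_\omega^{-1}(\mu_0)$. The trade-off: the paper's energy computation is a one-line identity that handles the whole interval uniformly, whereas your barrier argument is more hands-on and forces the extra care at $t_*$, where $\sqrt{Q}$ vanishes and is not differentiable; on the other hand your version makes transparent \emph{why} the inequality holds (the solution cannot cross the nullcline of the Riccati flow from below because the nullcline is moving upward, $Q' > 0$). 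All the delicate steps --- the existence of a largest zero of $g = Y + \sqrt{Q}$, the sign of $g'$ there, and the exclusion of $Y(t_*) = 0$ via $g' \to +\infty$ --- check out.
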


\begin{proof}
By Lemma \ref{efex}, the eigenfunction $\Phi$ is smooth and satisfies
\[
	(V_\omega \Phi')' = \frac{k^2 \Phi}{V_\omega} - \lambda_{k,1}(\omega) V_\omega \Phi
\]
Therefore $Y$ is smooth over $(0, r_1-r_2)$ and satisfies
\[
	Y' = \frac{k^2}{V_\omega} - \lambda_{k,1}(\omega) V_\omega - \frac{Y^2}{V_\omega}
\]
We first show that $Y$ has no critical points.
To see this, suppose $t_0$ is a critical point of $Y$.
Then
\[
	[Y(t_0)]^2 < k^2
\]
This yields
\[
	Y''(t_0)= -V_\omega'(t_0) \bigg( \frac{k^2}{[V_\omega(t_0)]^2}+\lambda_{k,1}(\omega) - \frac{[Y(t_0)]^2}{[V_\omega(t_0)]^2} \bigg) > 0
\]
That is, every critical point of $Y$ is a local minimum.
Note that
\[
	\lim_{t \to r_1-r_2} Y(t) = -\infty
\]
It follows that $Y$ has no critical points and $Y'<0$ over $(0,r_1-r_2)$.

Lemma \ref{muV} and the assumption that $\mu_0 > V(r_2)$ imply that $V_\omega^{-1}(\mu_0)$ is in $(0,r_1-r_2)$.
We will show that, over $(0,r_1-r_2)$,
\[
	Y^2 > k^2 - \lambda_{k,1}(\omega) V_\omega^2
\]
This inequality implies that $Y$ is non-vanishing over $[V_\omega^{-1}(\mu_0)), r_1-r_2)$, i.e. $Y<0$ over $[V_\omega^{-1}(\mu_0),r_1-r_2)$.
To prove the inequality, it suffices to show that, over $[0,r_1-r_2]$,
\[
	\lambda_{k,1}(\omega) V_\omega^2 \Phi^2 + V_\omega^2 (\Phi')^2 - k^2 \Phi^2 > 0
\]
This is obvious at the endpoints, where $\Phi$ vanishes and $\Phi'$ does not.
It also follows in the interior, because over $(0,r_1-r_2)$,
\[
	\frac{d}{dt} \bigg[ \lambda_{k,1}(\omega) V_\omega^2 \Phi^2 + V_\omega^2 (\Phi')^2 - k^2 \Phi^2 \bigg] = 2 \lambda_{k,1}(\omega) V_\omega V_\omega' \Phi^2 < 0
\]
\end{proof}

Recursively define sequences $\{\mu_m\}$ and $\{ y_m \}$ as follows.
Recall $\mu_0$ is defined by
\[
	\mu_0 = \frac{k}{\sqrt{\lambda_{k,1}(\omega)}}
\]
Let $m$ be a positive integer.
Assume that $\mu_{m-1}$ is defined and
\[
	V(r_2) \le \mu_{m-1} \le \mu_0
\]
If $\mu_{m-1}=V(r_2)$, then define $y_{m-1}=-\infty$.
Otherwise, define
\[
	y_{m-1} = Y(V_\omega^{-1}(\mu_{m-1}))
\]
If $y_{m-1}^2 > k^2$, define $\mu_m=V(r_2)$.
Otherwise, define
\[
	\mu_m = \max \bigg\{ V(r_2), \sqrt{\frac{k^2 - y_{m-1}^2}{\lambda_{k,1}(\omega)}} \bigg\}
\]

\begin{Lemma}
\label{mumono}
The sequence $\{ \mu_m \}$ is monotonically decreasing.
Moreover, if $M$ is large, then $\mu_M=V(r_2)$.
\end{Lemma}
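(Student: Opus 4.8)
The plan is to prove both assertions by induction, using throughout the two features of $Y$ supplied by Lemma \ref{Y}: the strict pointwise bound $Y^2 > k^2 - \lambda_{k,1}(\omega) V_\omega^2$ on $(0,r_1-r_2)$, and the blow-up $\lim_{t \to r_1-r_2} Y(t) = -\infty$. First I would record that the recursion is well-posed. Since $V_\omega(t) = V(r_1-t)$ and $V$ is increasing, $V_\omega$ decreases bijectively from $V(r_1)$ to $V(r_2)$ on $[0,r_1-r_2]$; and by Lemma \ref{muV} together with the standing assumption $\mu_0 > V(r_2)$ we have $V(r_2) < \mu_0 < V(r_1)$. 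Hence any $\mu$ with $V(r_2) < \mu \le \mu_0$ has $V_\omega^{-1}(\mu) \in (0,r_1-r_2)$, where $Y$ is finite, so $y_{m-1}$ is finite whenever $\mu_{m-1} > V(r_2)$.

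For monotonicity I would show by induction that $V(r_2) \le \mu_m \le \mu_{m-1} \le \mu_0$, which simultaneously keeps the recursion in force. The lower bound is immediate from the defining maximum. For the upper bound: if $\mu_{m-1} = V(r_2)$ then $y_{m-1} = -\infty$ and the recursion returns $\mu_m = V(r_2) = \mu_{m-1}$; if $\mu_{m-1} > V(r_2)$ and either $y_{m-1}^2 > k^2$ or the maximum is attained by $V(r_2)$, then $\mu_m = V(r_2) < \mu_{m-1}$. In the one remaining case $\mu_m = \sqrt{(k^2 - y_{m-1}^2)/\lambda_{k,1}(\omega)}$, so $\lambda_{k,1}(\omega)\mu_m^2 = k^2 - y_{m-1}^2$; evaluating the bound of Lemma \ref{Y} at $t = V_\omega^{-1}(\mu_{m-1})$, where $V_\omega = \mu_{m-1}$, gives $y_{m-1}^2 > k^2 - \lambda_{k,1}(\omega)\mu_{m-1}^2$, whence $\lambda_{k,1}(\omega)\mu_m^2 < \lambda_{k,1}(\omega)\mu_{m-1}^2$ and $\mu_m < \mu_{m-1}$.

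For termination, suppose toward a contradiction that $\mu_m > V(r_2)$ for every $m$. Then $\{\mu_m\}$ is strictly decreasing and bounded below, hence converges to some $\mu_\infty \ge V(r_2)$, and for every $m$ the maximum is attained by its second term, so $y_{m-1}^2 \le k^2$ and $\lambda_{k,1}(\omega)\mu_m^2 = k^2 - y_{m-1}^2$. If $\mu_\infty > V(r_2)$, then $V_\omega^{-1}(\mu_{m-1}) \to V_\omega^{-1}(\mu_\infty) \in (0,r_1-r_2)$, so by continuity of $Y$ the relation passes to the limit as $\lambda_{k,1}(\omega)\mu_\infty^2 = k^2 - [Y(V_\omega^{-1}(\mu_\infty))]^2$, contradicting the strict inequality of Lemma \ref{Y} at that interior point. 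Hence $\mu_\infty = V(r_2)$, so $V_\omega^{-1}(\mu_{m-1}) \to r_1-r_2$, and the blow-up $Y \to -\infty$ forces $y_{m-1}^2 > k^2$ for all large $m$, which makes $\mu_m = V(r_2)$ by the recursion, contradicting $\mu_m > V(r_2)$. Therefore $\mu_M = V(r_2)$ for some $M$, and since the recursion fixes $V(r_2)$, we get $\mu_m = V(r_2)$ for all $m \ge M$.

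The routine parts are the case analysis in the monotonicity step and the well-posedness bookkeeping. The step carrying the real content is termination, and within it the crux is converting the two qualitative facts of Lemma \ref{Y} — the strict pointwise inequality, which rules out an interior fixed point of the iteration, and the blow-up at $r_1-r_2$, which forces a jump down to $V(r_2)$ — into the conclusion that $V(r_2)$ is reached in finitely many steps rather than only in the limit. I expect no genuine difficulty beyond assembling these two observations in the correct order.
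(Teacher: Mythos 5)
Your proof is correct and follows essentially the same route as the paper's: monotonicity of $\{\mu_m\}$ by induction, and termination by showing that if $\mu_m > V(r_2)$ for all $m$ the limit $\mu_*$ would satisfy $\big(Y(V_\omega^{-1}(\mu_*))\big)^2 = k^2 - \lambda_{k,1}(\omega)\mu_*^2$, contradicting the strict inequality of Lemma \ref{Y}. The only cosmetic differences are that for the monotone step you apply the pointwise bound $Y^2 > k^2 - \lambda_{k,1}(\omega)V_\omega^2$ directly at $V_\omega^{-1}(\mu_{m-1})$, whereas the paper propagates $y_m \le y_{m-1} \le 0$ using $Y' < 0$, and that you spell out the blow-up of $Y$ near $r_1 - r_2$ to exclude $\mu_* = V(r_2)$, a point the paper asserts without elaboration.
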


\begin{proof}
By Lemma \ref{Y}, we have $y_0 < 0$.
By assumption,
\[
	\mu_0>V(r_2)
\]
It is immediate that
\[
	\mu_1 < \mu_0
\]
Fix a positive integer $m$ and assume that
\[
	\mu_m \le \mu_{m-1} \le \mu_0
\]
By Lemma \ref{Y},
\[
	y_m \le y_{m-1} \le 0
\]
This immediately implies that
\[
	\mu_{m+1} \le \mu_m
\]
Therefore, the sequence $\{ \mu_m \}$ is decreasing.

Now suppose that, for all $m$,
\[
	\mu_m > V(r_2)
\]
In this case, for all $m$,
\[
	y_m^2 \le k^2
\]
for all $m$.
The sequence $\{ \mu_m \}$ converges to some number $\mu_*$ in $[V(r_2),\mu_0]$.
Also, the sequence $\{ y_m \}$ converges to a finite number $y_*$ with $y_*^2 \le k^2$.
In fact, $\mu_*>V(r_2)$ and
\[
	y_* = Y(V_\omega^{-1}(\mu_*))
\]
Moreover,
\[
	\lambda_{k,1}(\omega) \mu_*^2 = k^2- y_*^2
\]
Therefore,
\[
	(Y(V_\omega^{-1}(\mu_*))^2 = k^2 - \lambda_{k,1}(\omega) \mu_*^2
\]
By Lemma \ref{Y}, this is a contradiction.
\end{proof}

\begin{Definition}
For each $\mu$ in $[V(r_2),V(r_1))$, let $\Gamma_\mu$ be the set of curves
\[
	\chi:[0,D] \to \overline{\R^2_+}
\]
which satisfy the following four properties.
First, $D$ is a finite number such that
\[
	D \ge r_1-r_2
\]
Second, $\chi$ is piecewise smooth and parametrized by arc length.
Third, if $\chi=(F_\chi, G_\chi)$, i.e. if $F_\chi$ and $G_\chi$ are the component functions of $\chi$, then
\[
	F_\chi(D)=r_2
\]
and, for all $t$ in $(0,D)$,
\[
	r_2 < F_\chi(t) < r_1
\]
Fourth, for all $t$ in $[0, V_\omega^{-1}(\mu)]$,
\[
	\chi(t)=\omega(t)
\]
\end{Definition}

Note that $\chi_0$ is in $\Gamma_{\mu_0}$.
Also, the only curve in $\Gamma_{V(r_2)}$ is $\omega$.

\begin{Lemma}
\label{mm1}
Let $m$ be a positive integer.
Assume that there is a curve $\chi_{m-1}$ in $\Gamma_{\mu_{m-1}}$ such that
\[
	\lambda_{k,1}(\chi_{m-1}) \ge \lambda_{k,1}(\omega)
\]
Then there is a curve $\chi_m$ in $\Gamma_{\mu_m}$ such that either $\chi_m=\chi_{m-1}$ or
\[
	\lambda_{k,1}(\chi_m) > \lambda_{k,1}(\chi_{m-1})
\]
\end{Lemma}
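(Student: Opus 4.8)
The plan is to obtain $\chi_m$ from $\chi_{m-1}$ by straightening the first excursion of $\chi_{m-1}$ below the level $\mu_{m-1}$, so that the new curve coincides with $\omega$ all the way down to the level $\mu_m$. Write $a = V_\omega^{-1}(\mu_{m-1})$ and $b = V_\omega^{-1}(\mu_m)$, so that $a$ is the last parameter at which $\chi_{m-1}$ is required to agree with $\omega$ and $a < b$. If $\chi_{m-1}$ already coincides with $\omega$ on $[0,b]$, I set $\chi_m = \chi_{m-1}$ and there is nothing to prove. Otherwise I let $\tau$ be the first parameter after $a$ at which $F_{\chi_{m-1}}$ attains the value $V^{-1}(\mu_m)$; such a $\tau$ exists because $F_{\chi_{m-1}}(a) = V^{-1}(\mu_{m-1}) > V^{-1}(\mu_m) \ge r_2 = F_{\chi_{m-1}}(D)$. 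I then excise the arc $\chi_{m-1}|_{[a,\tau]}$, replace it by the straight descent of $\omega$ from the level $\mu_{m-1}$ to the level $\mu_m$, translate the remaining tail $\chi_{m-1}|_{[\tau,D]}$ in the $G$-coordinate so that the curve stays continuous, and reparametrize by arc length. Because the straight descent is strictly shorter than any other arc joining those two levels, $\chi_m$ is defined on a shorter interval. Checking the four defining properties of $\Gamma_{\mu_m}$ is then routine: the substantive point is that $\chi_m$ agrees with $\omega$ on $[0,b]$, which holds by construction, and that $F_{\chi_m}$ remains in $(r_2,r_1)$ on the interior.

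For the eigenvalue comparison I would use that, since $n=1$, the quantity $\lambda_{k,1}$ is simply the infimum of the Rayleigh quotient, attained by the positive first eigenfunction. Let $\psi$ be the first eigenfunction of $\chi_m$. I would transplant $\psi$ to a test function $w$ in $\Lip_0(0,D)$ for $\chi_{m-1}$ by keeping $w=\psi$ on the common initial segment $[0,a]$ and on the common tail, and by matching levels across the modified arc, $w(t) = \psi(r_1 - F_{\chi_{m-1}}(t))$ for $t$ in $[a,\tau]$. Since the initial segment and the tail contribute identically to numerator and denominator of both quotients, the chain
\[
\lambda_{k,1}(\chi_{m-1}) \le \frac{\int_0^D \frac{|w'|^2 V_{\chi_{m-1}}}{|\chi_{m-1}'|} + \frac{k^2 w^2 |\chi_{m-1}'|}{V_{\chi_{m-1}}} \, dt}{\int_0^D w^2 V_{\chi_{m-1}} |\chi_{m-1}'| \, dt} \le \lambda_{k,1}(\chi_m)
\]
collapses to a single inequality supported on the modified arc: the contribution of $w$ on $[a,\tau]$ to the shifted form must not exceed the contribution of $\psi$ on $[a,b]$. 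Using $|F_{\chi_{m-1}}'| \le 1$ to bound the gradient term, and a coarea change of variables for the level map, this reduces to a pointwise sign condition for the integrand expressed through the logarithmic derivative of the eigenfunction. (When the excised arc is not monotone in level the level map folds; I would handle this as in Lemma \ref{12}, by a preliminary running-minimum reduction that only increases the eigenvalue.)

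The heart of the matter, and the step I expect to be the main obstacle, is verifying this pointwise sign condition over the entire range of levels between $\mu_{m-1}$ and $\mu_m$. Here the hypothesis $\lambda_{k,1}(\chi_{m-1}) \ge \lambda_{k,1}(\omega)$ is essential. On $[0,a]$ the curve $\chi_{m-1}$ coincides with $\omega$, so its first eigenfunction satisfies the same ordinary differential equation as $\Phi$ but with the larger eigenvalue; a Riccati comparison started from the common blow-up at the left endpoint (equivalently, the Sturm comparison theorem) forces its logarithmic derivative at the level $\mu_{m-1}$ to lie at or below $y_{m-1} = Y(V_\omega^{-1}(\mu_{m-1}))$. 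The definition of $\mu_m$ through the relation $\lambda_{k,1}(\omega)\,\mu_m^2 = k^2 - y_{m-1}^2$, together with the inequality $Y^2 > k^2 - \lambda_{k,1}(\omega) V_\omega^2$ of Lemma \ref{Y}, is exactly calibrated so that this control of the logarithmic derivative persists, and hence the sign condition holds, all the way down to the level $\mu_m$, while it would fail beyond it. This is precisely why the flattening is stopped at $\mu_m$ and why the construction must be iterated level by level.

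Finally I would track the equality case to separate the two alternatives in the statement. The bound $|F_{\chi_{m-1}}'| \le 1$ is strict on a set of positive measure exactly when the excised arc is not the straight descent, and on that set the pointwise inequality is strict, forcing $\lambda_{k,1}(\chi_{m-1}) < \lambda_{k,1}(\chi_m)$; in the complementary case the excised arc already was the straight descent and $\chi_m = \chi_{m-1}$. I therefore expect essentially no difficulty in the construction or in the membership check, with the entire weight of the proof resting on the calibrated sign estimate of the third paragraph.
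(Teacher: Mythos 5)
Your construction of $\chi_m$ and your identification of the key calibration --- the Riccati/Sturm comparison on the common initial segment forcing the logarithmic derivative below $y_{m-1}$, and the definition of $\mu_m$ being exactly what lets that control persist --- match the paper's Lemma \ref{efes}. But the variational mechanism you attach this to does not work. You transplant the first eigenfunction $\psi$ of $\chi_m$ back to a test function $w$ for $\chi_{m-1}$ by matching levels, and claim $R_{\chi_{m-1}}[w]\le\lambda_{k,1}(\chi_m)$. Writing $c=|F_{\chi_{m-1}}'|\in(0,1]$ and changing variables to the level parameter $u$ on the modified arc, the difference of the two forms, $\bigl(N_1-\lambda D_1\bigr)-\bigl(N_2-\lambda D_2\bigr)$ with $\lambda=\lambda_{k,1}(\chi_m)$, has integrand
\[
(1-c)\Bigl[-|\psi'|^2V_\omega+\tfrac{1}{c}\bigl(\tfrac{k^2\psi^2}{V_\omega}-\lambda\,\psi^2V_\omega\bigr)\Bigr]\,du .
\]
For $m\ge 1$ the modified arc lies at levels $V_\omega<\mu_{m-1}<\mu_0=k/\sqrt{\lambda_{k,1}(\omega)}$, so $k^2/V_\omega-\lambda V_\omega>0$ there, and the bracket blows up like $1/c$ wherever $|F_{\chi_{m-1}}'|$ is small; nothing in the hypotheses bounds $|F'|$ away from zero. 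The sign condition that Lemmas \ref{Y} and \ref{efes} actually deliver is $-|\psi'|^2V_\omega+k^2\psi^2/V_\omega-\lambda\psi^2V_\omega<0$, i.e.\ with coefficient $1$ rather than $1/c$ on the zeroth-order part, and that is strictly weaker than what your one-shot transplantation requires. (The same problem infects your treatment of the folds: the argument of Lemma \ref{12} uses $k^2/V\le\lambda_{k,1}(\omega)V$ on the collapsed set, i.e.\ $V\ge\mu_0$, which fails below level $\mu_0$; this is precisely why the paper does not reuse Lemma \ref{12} at this stage.)

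The paper circumvents this by never comparing the two endpoint curves directly. It deforms $\chi_{m-1}$ to $\beta_m$ through the family $\zeta_s$ (scaling $G'$ by $s$ on $[0,P_m)$), and then handles the arc-length collapse through a second family $\sigma_s$, proving in Lemmas \ref{mab} and \ref{mcm} that $s\mapsto\lambda_{k,1}$ is monotone by differentiating the Rayleigh quotient at the eigenfunction of the \emph{intermediate} curve. In the resulting first-variation formula the gradient term appears as $|\phi'|^2v^2/\sigma^2$ with $\sigma$ the current speed, so the Riccati variable $X=v\phi'/(\sigma\phi)$ absorbs the speed and the condition to check is exactly $X^2>k^2-\lambda v^2$, which is what Lemma \ref{efes} proves down to level $\mu_m$; no $1/c$ degeneration ever appears because the perturbation is infinitesimal and the eigenfunction readjusts at every $s$. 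To repair your proof you would need to replace the single transplantation by such a continuous deformation (or find some other way to exploit the hypothesis $\lambda_{k,1}(\chi_{m-1})\ge\lambda_{k,1}(\omega)$ to rule out small $|F'|$ on the arc, which it does not do).
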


Assuming the lemma for now, we use it to prove Lemma \ref{kngen} for the case $n=1$.

\begin{proof}[Proof of Lemma 2.3 for the case $n=1$]
Suppose $\lambda_{k,1}(\alpha) \ge \lambda_{k,1}(\omega)$.
By Lemmas \ref{a1}, \ref{12}, and \ref{23},
\[
	\lambda_{k,1}(\alpha) \le \lambda_{k,1}(\chi_0)
\]
Additionally, if equality holds, then $\alpha=\chi_0$.
Use Lemma \ref{mm1} to recursively define a sequence of curves $\{ \chi_m \}$ such that, for every positive integer $m$, the curve $\chi_m$ is in $\Gamma_{\mu_m}$, and either $\chi_m=\chi_{m-1}$ or
\[
	\lambda_{k,1}(\chi_{m-1}) < \lambda_{k,1}(\chi_m)
\]
By Lemma \ref{mumono}, there is a positive integer $M$ such that
\[
	\mu_M=V(r_2)
\]
Then $\chi_M$ is in $\Gamma_{V(r_2)}$ which implies that
\[
	\chi_M=\omega
\]
Now
\[
	\lambda_{k,1}(\alpha) \le \lambda_{k,1}(\chi_0) \le \ldots \le \lambda_{k,1}(\chi_M)=\lambda_{k,1}(\omega)
\]
By assumption,
\[
	\lambda_{k,1}(\alpha) \ge \lambda_{k,1}(\omega)
\]
Therefore
\[
	\lambda_{k,1}(\alpha) = \lambda_{k,1}(\chi_0) = \ldots = \lambda_{k,1}(\chi_M)=\lambda_{k,1}(\omega)
\]
This implies that
\[
	\alpha=\chi_0=\ldots=\chi_M=\omega
\]
\end{proof}

It remains to prove Lemma \ref{mm1}.
The argument is based on the following lemma.
We remark that the statement of this lemma uses Definition \ref{eigvsig}.

\begin{Lemma}
\label{efes}
Let $Q \ge r_1-r_2$ be a finite number.
Let
\[
	v:[0,Q] \mapsto (0,\infty)
\]
and
\[
	\sigma:[0,Q] \mapsto (0,\infty)
\]
be continuous functions.
Let $m$ be a positive integer.
Assume that, for $t$ in $[0,V_\omega^{-1}(\mu_{m-1})]$,
\[
	v(t)=V_\omega(t)
\]
and, for $t$ in $[0,V_\omega^{-1}(\mu_{m-1})]$,
\[
	\sigma(t)=1
\]
Also assume that $v(Q)=V(r_2)$ and that
\[
	\lambda_{k,1}(v,\sigma) \ge \lambda_{k,1}(\omega)
\]
Let $\phi=\phi_{k,1}$ be the eigenfunction given by Lemma \ref{efex}, corresponding to $v$ and $\sigma$.
Define
\[
	P_m = \min \bigg\{ t \in [0,Q] : v(t)= \mu_m \bigg\}
\]
Then, over $[0,P_m)$,
\[
	- \frac{|\phi'|^2 v^2}{\sigma^2} + k^2 |\phi|^2 - \lambda_{k,1}(v,\sigma) |\phi|^2 v^2 < 0
\]
\end{Lemma}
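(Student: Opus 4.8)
The plan is to write $\lambda = \lambda_{k,1}(v,\sigma)$ and to recast the claim as a positivity statement. Since $n=1$, the eigenfunction $\phi=\phi_{k,1}$ has no interior root, so $\phi>0$ on $(0,Q)$ with $\phi(0)=0$ and $\phi'(0)\neq 0$. Setting
\[
	N = \frac{v^2 (\phi')^2}{\sigma^2} + \lambda v^2 \phi^2 - k^2 \phi^2,
\]
the desired inequality is exactly $N>0$ on $[0,P_m)$, and $N(0)=v(0)^2\phi'(0)^2/\sigma(0)^2>0$. Because $v,\sigma$ are continuous and bounded away from $0$ on $[0,Q]$, the function $v\phi'/\sigma$ is $C^1$, so the equation $(v\phi'/\sigma)'=k^2\sigma\phi/v-\lambda v\sigma\phi$ of Lemma \ref{efex} holds classically, and differentiating $N$ collapses after cancellation to
\[
	N' = 2\lambda v v' \phi^2 .
\]
I would also record the Riccati quantity $Z=v\phi'/(\sigma\phi)$, which is continuous on $(0,Q)$ and satisfies $Z'=\frac{\sigma}{v}(k^2-\lambda v^2-Z^2)$, together with the identity $N=\phi^2\big(Z^2-(k^2-\lambda v^2)\big)$. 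Thus on $(0,Q)$ one has $N>0$ exactly when $Z^2>k^2-\lambda v^2$, equivalently when $Z'<0$.

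Next I would compare $Z$ with the Riccati variable $Y=V_\omega\Phi'/\Phi$ of $\omega$ from Lemma \ref{Y} on the initial interval $[0,q]$, where $q=V_\omega^{-1}(\mu_{m-1})$ and, by hypothesis, $v=V_\omega$ and $\sigma=1$. There $\phi$ and $\Phi$ solve the same Sturm--Liouville equation except that $\lambda\ge\lambda_{k,1}(\omega)$. Forming the Wronskian $\Omega=V_\omega(\Phi\phi'-\phi\Phi')$ gives $\Omega'=V_\omega\Phi\phi\,(\lambda_{k,1}(\omega)-\lambda)\le 0$ with $\Omega(0)=0$, hence $\Omega\le 0$ and therefore $Z-Y=\Omega/(\phi\Phi)\le 0$ on $(0,q]$. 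By Lemma \ref{Y} and $\mu_{m-1}\le\mu_0$, the value $y_{m-1}=Y(q)$ is negative, so $Z(q)\le y_{m-1}<0$ and $Z(q)^2\ge y_{m-1}^2$. From the definition of $\mu_m$ one checks in every case (including the truncated ones where $\mu_m=V(r_2)$) that $y_{m-1}^2\ge k^2-\lambda_{k,1}(\omega)\mu_m^2$, so $Z(q)^2\ge k^2-\lambda_{k,1}(\omega)\mu_m^2$.

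With these two bounds the conclusion follows. On $[0,q]$ we have $v'=V_\omega'<0$, so $N'=2\lambda v v'\phi^2<0$ and $N$ is strictly decreasing; and $N(q)=\phi(q)^2\big(Z(q)^2-(k^2-\lambda\mu_{m-1}^2)\big)\ge 0$, since $k^2-\lambda_{k,1}(\omega)\mu_m^2\ge k^2-\lambda\mu_{m-1}^2$ using $\lambda\ge\lambda_{k,1}(\omega)$ and $\mu_m\le\mu_{m-1}$. Hence $N>0$ on $[0,q)$. Suppose now $N$ had a zero in $[0,P_m)$ and let $t^*$ be the least one; then $t^*\ge q$. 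If $N(q)=0$ the displayed inequalities force $\lambda=\lambda_{k,1}(\omega)$ and $\mu_m=\mu_{m-1}$, whence $v(q)=\mu_m$ and $P_m=q$, so $[0,P_m)=[0,q)$ and there is nothing more to prove; thus I may assume $t^*\in(q,P_m)$ with $N>0$ on $[0,t^*)$. Then $Z'=-\sigma N/(v\phi^2)<0$ on $(0,t^*)$, so $Z$ decreases strictly past the negative value $Z(q)$, giving $Z(t^*)^2>Z(q)^2\ge k^2-\lambda_{k,1}(\omega)\mu_m^2$. But $N(t^*)=0$ means $Z(t^*)^2=k^2-\lambda v(t^*)^2$, so $k^2-\lambda v(t^*)^2>k^2-\lambda_{k,1}(\omega)\mu_m^2\ge k^2-\lambda\mu_m^2$, forcing $v(t^*)<\mu_m$. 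This contradicts $v>\mu_m$ on $[0,P_m)$, which holds because $P_m$ is the first time $v$ attains $\mu_m$ and $v(0)=V(r_1)>\mu_0\ge\mu_m$ by Lemma \ref{muV}.

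I expect the Riccati comparison $Z\le Y$ on $[0,q]$ to be the crux, because both $Z$ and $Y$ blow up to $+\infty$ at $t=0$, so no ordinary comparison principle applies at the endpoint. The Wronskian device is what resolves this: although $Y,Z\to+\infty$, one has $\Omega(0)=0$, and the sign of $\Omega'$ (where the hypothesis $\lambda\ge\lambda_{k,1}(\omega)$ is used) together with the positivity of $\phi\Phi$ on the relevant range yields the comparison cleanly. A secondary, purely bookkeeping difficulty is tracking the degenerate possibilities $\mu_m=V(r_2)$ and $N(q)=0$; as indicated, each of these collapses to a case in which the desired conclusion is immediate.
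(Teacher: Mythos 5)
Your proposal is correct and follows essentially the same route as the paper: your $Z$ is the paper's Riccati variable $X=v\phi'/(\sigma\phi)$, your Wronskian bound $\Omega\le 0$ is the paper's integral identity giving $X\le Y$ on the initial segment, and your argument at the first zero $t^*$ of $N$ (equivalently the first critical point of $X$), using $Z(q)\le y_{m-1}<0$ and the definition of $\mu_m$ to force $v(t^*)<\mu_m$, is exactly the paper's concluding contradiction. The only difference is cosmetic: on $[0,q]$ you integrate $N'=2\lambda v v'\phi^2<0$ from the endpoint value $N(q)\ge 0$ (legitimate there since $v=V_\omega$ is smooth), where the paper instead splits off $[0,V_\omega^{-1}(\mu_0)]$ and shows $X'\le Y'<0$ pointwise on the rest.
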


\begin{proof}
The inequality is obvious over $[0, V_\omega^{-1}(\mu_0)]$.
We will prove it holds over $(V_\omega^{-1}(\mu_0),P_m)$.
By Lemma \ref{efex}, the eigenfunction $\phi$ is continuously differentiable over $[0,Q]$ and
\[
	\bigg( \frac{ v \phi' }{ \sigma } \bigg)' = \frac{k^2 \sigma \phi}{v} - \lambda_{k,n}(v,\sigma) v \sigma \phi
\]
Recall $\Phi=\Phi_{k,1}$ is the eigenfunction corresponding to $\omega$.
We may assume that $\phi$ and $\Phi$ are positive over $(0,Q)$ and $(0,r_2-r_1)$, respectively.

Define
\[
	X:(0,Q) \to \R
\]
by
\[
	X(t) = \frac {v \phi'}{\sigma \phi}
\]
Then $X$ is differentiable over $(0,Q)$, and
\[
	X' = \frac{k^2 \sigma}{v} - \lambda_{k,1}(v,\sigma) v \sigma - \frac {\sigma X^2}{v}
\]
To prove the lemma, it suffices to show that $X'<0$ over $[V_\omega^{-1}(\mu_0),P_m)$,

Define an interval
\[
	I = [V_\omega^{-1}(\mu_0),V_\omega^{-1}(\mu_{m-1})] \cap [V_\omega^{-1}(\mu_0),P_m)
\]
First we show that $X'<0$ holds over $I$.
We do this by comparing to $Y$, which was defined by
\[
	Y = \frac{V_\omega \Phi'}{\Phi}
\]
Note that, over $(0,r_1-r_2)$,
\[
	Y' = \frac{k^2}{V_\omega} - \lambda_{k,1}(\omega) V_\omega - \frac{Y^2}{V_\omega}
\]
For $t$ in $(0, V_\omega^{-1}(\mu_{m-1})]$,
\[
\begin{split}
	0 &\le \Big( \lambda_{k,1}(v,\sigma) - \lambda_{k,1}(\omega) \Big) \int_{0}^t V_\omega \phi \Phi \\
	& = \int_{0}^t (V_\omega \Phi')' \, \phi -(V_\omega \phi')' \, \Phi \\
	&= V_\omega(t) \Phi'(t) \phi(t) - V_\omega(t) \phi'(t) \Phi(t) \\
\end{split}
\]
Therefore, over $I$,
\[
	X \le Y
\]
By Lemma \ref{Y}, we have $Y<0$ over $I$, so
\[
	 X^2 \ge Y^2
\]
Since $\lambda_{k,1}(v,\sigma) \ge \lambda_{k,1}(\omega)$, it follows that, over $I$,
\[
	 X' \le Y'
\]
By Lemma \ref{Y}, we have $Y'<0$ over $I$, which yields $X' < 0$ over $I$.

Note that for the case $V_\omega^{-1}(\mu_{m-1}) \ge P_m$, this completes the proof.
We finish the proof of the lemma by assuming that
\[
	V_\omega^{-1}(\mu_{m-1}) < P_m
\]
and showing that $X'<0$ holds over the interval $[V_\omega^{-1}(\mu_{m-1}),P_m)$.
Note that in this case, $V_\omega^{-1}(\mu_{m-1})$ is in $I$, so
\[
	X'(V_\omega^{-1}(\mu_{m-1})) < 0
\]
Suppose $X'<0$ does not hold over $[V_\omega^{-1}(\mu_{m-1}),P_m)$.
Let $\xi$ be the smallest critical point of $X$ in $[V_\omega^{-1}(\mu_{m-1}),P_m)$.
Then
\[
	[X(\xi)]^2 = k^2 - \lambda_{k,1}(v,\sigma) [v(\xi)]^2
\]
Moreover, $X$ is decreasing over $[V_\omega^{-1}(\mu_{m-1}),\xi]$, so
\[
	X(\xi) < X(V_\omega^{-1}(\mu_{m-1})) \le Y(V_\omega^{-1}(\mu_{m-1})) = y_{m-1} \le 0
\]
Therefore,
\[
	\lambda_{k,1}(\omega)[v(\xi)]^2 + y_{m-1}^2 < \lambda_{k,1}(\omega) [v(\xi)]^2 +[X(\xi)]^2 = k^2
\]
This implies that
\[
	v(\xi) < \sqrt{\frac{k^2-y_{m-1}^2}{\lambda_{k,1}(\omega)}}
\]
However, the inequality $\xi < P_m$, implies
\[
	v(\xi) > \mu_m \ge \sqrt{\frac{k^2-y_{m-1}^2}{\lambda_{k,1}(\omega)}}
\]
This contradiction shows that $X'<0$ over $[V_\omega^{-1}(\mu_{m-1}),P_m)$.
\end{proof}

Proceeding with the proof of Lemma \ref{mm1}, let $m$ be a positive integer.
Let $L_{m-1} \ge r_1-r_2$ be finite and let
\[
	\chi_{m-1}:[0,L_{m-1}] \to \overline{\R^2_+}
\]
be in $\Gamma_{\mu_{m-1}}$.
Write $\chi_{m-1}=(F_{\chi_{m-1}},G_{\chi_{m-1}})$, and define $V_{\chi_{m-1}}=V \circ F_{\chi_{m-1}}$.
Then recall that
\[
	V(r_2) \le \mu_m < V(r_1)
\]
and define
\[
	P_m = \min \bigg\{ t \in [0,L_{m-1}] : V_{\chi_{m-1}}(t)= \mu_m \bigg\}
\]
Define a piecewise smooth function
\[
	\beta_m:[0,L_{m-1}] \to \overline{\R^2_+}
\]
such that
\[
	\beta_m(0)=(r_1,0)
\]
and
\[
	\beta_m'(t) =
	\begin{cases}
		(F_{\chi_{m-1}}'(t), 0) & t \in [0,P_m) \\
		(F_{\chi_{m-1}}'(t), G_{\chi_{m-1}}'(t) & t \in (P_m,L_{m-1}] \\
	\end{cases}
\]

\begin{Lemma}
\label{mab}
Assume $\chi_{m-1}$ is not equal to $\beta_m$ and $\lambda_{k,1}(\chi_{m-1}) \ge \lambda_{k,1}(\omega)$.
Then
\[
	\lambda_{k,1}(\chi_{m-1}) < \lambda_{k,1}(\beta_m)
\]
\end{Lemma}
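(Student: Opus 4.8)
The plan is to interpolate continuously between $\chi_{m-1}$ and $\beta_m$ by a flattening family and to show that the bottom eigenvalue increases monotonically along it, with Lemma \ref{efes} supplying exactly the sign information needed. For $p\in[0,1]$ let $\chi_{m-1,p}$ be the piecewise smooth curve with $\chi_{m-1,p}(0)=(r_1,0)$ and
\[
	\chi_{m-1,p}'(t) = \begin{cases} (F_{\chi_{m-1}}'(t),\, p\,G_{\chi_{m-1}}'(t)) & t\in[0,P_m), \\ (F_{\chi_{m-1}}'(t),\, G_{\chi_{m-1}}'(t)) & t\in(P_m,L_{m-1}]. \end{cases}
\]
Then $\chi_{m-1,1}=\chi_{m-1}$ and $\chi_{m-1,0}=\beta_m$. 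Every member shares the first component $F_{\chi_{m-1}}$, hence the same weight $V_{\chi_{m-1}}$ and the same value $P_m$; they differ only in the speed $\sigma_p=|\chi_{m-1,p}'|$, which is nondecreasing in $p$ and equals $1$ on $[0,V_\omega^{-1}(\mu_{m-1})]$ (where $G_{\chi_{m-1}}'=0$). Write $\lambda(p)=\lambda_{k,1}(\chi_{m-1,p})$ and let $\phi_p$ be the corresponding ground state from Lemma \ref{efex}.

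For $p>0$ the curve is regular and $\lambda(p)$ is a simple eigenvalue, so $\lambda(p)$ and $\phi_p$ depend smoothly on $p$. Differentiating the Rayleigh quotient at $\phi_p$, using that the implicit $p$-dependence of the eigenfunction drops out for a simple eigenvalue, gives
\[
	\frac{d\lambda}{dp} = \frac{1}{\int_0^{L_{m-1}} \phi_p^2\, V_{\chi_{m-1}}\, \sigma_p \,dt}\int_0^{P_m} \frac{1}{V_{\chi_{m-1}}} \left( -\frac{|\phi_p'|^2 V_{\chi_{m-1}}^2}{\sigma_p^2} + k^2\phi_p^2 - \lambda(p)\phi_p^2 V_{\chi_{m-1}}^2 \right) \frac{\partial \sigma_p}{\partial p}\,dt.
\]
The outer factor is positive and $\partial_p\sigma_p\ge 0$. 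As long as $\lambda(p)\ge\lambda_{k,1}(\omega)$, the curve $\chi_{m-1,p}$ satisfies the hypotheses of Lemma \ref{efes} (same $F$, same $P_m$, and the required values $v=V_\omega,\sigma_p=1$ on the initial segment and $v(L_{m-1})=V(r_2)$), so the parenthesized expression is strictly negative over $[0,P_m)$. Hence $d\lambda/dp\le 0$.

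Since $\lambda(1)=\lambda_{k,1}(\chi_{m-1})\ge\lambda_{k,1}(\omega)$ and $\lambda$ is non-increasing wherever the hypothesis of Lemma \ref{efes} holds, a standard continuation argument shows $\lambda(p)\ge\lambda_{k,1}(\omega)$ for every $p\in(0,1]$, so Lemma \ref{efes} stays applicable along the whole family. If $\chi_{m-1}\neq\beta_m$, then $G_{\chi_{m-1}}$ is nonconstant on $[0,P_m)$, so $\partial_p\sigma_p>0$ on a set of positive measure where the integrand above is strictly negative; thus $d\lambda/dp<0$ and $\lambda$ strictly increases as $p$ decreases. Passing to $p\to 0$ then yields $\lambda_{k,1}(\chi_{m-1})<\lambda_{k,1}(\beta_m)$.

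The main obstacle is that, unlike Lemma \ref{a1}, a single finite comparison cannot work here: on the part of $[0,P_m)$ where $V_{\chi_{m-1}}$ has dropped below $\mu_0$ (which is possible precisely because $\mu_m<\mu_0$) the pointwise sign is unfavorable, and Lemma \ref{efes} bounds $|\phi_p'|^2V_{\chi_{m-1}}^2/\sigma_p^2$ rather than $|\phi_p'|^2V_{\chi_{m-1}}^2/\sigma_p$, so a direct step from speed $1$ to speed $\sigma_p$ loses a factor $\sigma_p\le 1$ that is fatal. This is what forces the infinitesimal formulation, where differentiation produces exactly the $\sigma_p^{-2}$ that Lemma \ref{efes} governs. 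The remaining technical care is the smooth dependence of the simple ground state on $p$ (standard for $p>0$) and the continuity of $\lambda_{k,1}$ at $p=0$, where $\beta_m$ may fail to be regular since $\sigma_0=|F_{\chi_{m-1}}'|$ can vanish; this last point is to be settled within the Lipschitz framework of Definition \ref{eiglip}.
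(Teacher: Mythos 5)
Your proposal is correct and follows essentially the same route as the paper: the same interpolation family $\zeta_s$ ($=\chi_{m-1,p}$) from $\beta_m$ to $\chi_{m-1}$, the same derivative formula for the Rayleigh quotient whose sign is controlled by Lemma \ref{efes}, and the same treatment of the degenerate endpoint $p=0$ via the Lipschitz framework and upper semicontinuity. The only cosmetic difference is that the paper avoids invoking smooth dependence of the eigenfunction by freezing $\phi$ at $s_0$ and differentiating the resulting upper barrier $\Psi(s)\ge\lambda_{k,1}(\zeta_s)$, while you appeal to the Hellmann--Feynman identity directly; both are standard and the continuation argument you sketch is the same one implicit in the paper.
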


\begin{proof}
For each $s$ in $[0,1]$, define a piecewise smooth curve
\[
	\zeta_s:[0,L_{m-1}] \to \overline{\R^2_+}
\]
such that
\[
	\zeta_s(0)=(r_1,0)
\]
and
\[
	\zeta_s'(t) =
	\begin{cases}
		(F_{\chi_{m-1}}'(t), s G_{\chi_{m-1}}'(t)) & t \in [0,P_m) \\
		(F_{\chi_{m-1}}'(t), G_{\chi_{m-1}}'(t) & t \in (P_m,L_{m-1}] \\
	\end{cases}
\]
Note that $\zeta_0=\beta_m$ and $\zeta_1=\chi_{m-1}$.

By a theorem of Kong and Zettl \cite[Theorem 3.1]{KZ}, the function
\[
	s \mapsto \lambda_{k,1}(\zeta_s)
\]
is continuous over $(0,1]$.
We begin by observing that this function is also upper semi-continuous at zero.
Let $\epsilon >0$.
There is a function $w$ in $\Lip_0(0,L_{m-1})$ such that
\[
	\frac{ \int_0^{L_{m-1}} \frac{|w'|^2 V_{\chi_{m-1}}}{|\zeta_0 '|} + \frac{k^2 w^2 | \zeta_0' |}{V_{\chi_{m-1}}} \,dt}{ \int_0^{L_{m-1}} w^2 V_{\chi_{m-1}} | \zeta_0' | \,dt} < \lambda_{k,1}(\zeta_0) + \epsilon
\]
By Lebesgue's convergence theorems
\[
	\lim_{s \to 0}
	\frac{ \int_0^{L_{m-1}} \frac{|w'|^2 V_{\chi_{m-1}}}{|\zeta_s '|} + \frac{k^2 w^2 | \zeta_s' |}{V_{\chi_{m-1}}} \,dt}{ \int_0^{L_{m-1}} w^2 V_{\chi_{m-1}} | \zeta_s' | \,dt}
 =
	\frac{ \int_0^{L_{m-1}} \frac{|w'|^2 V_{\chi_{m-1}}}{|\zeta_0 '|} + \frac{k^2 w^2 | \zeta_0' |}{V_{\chi_{m-1}}} \,dt}{ \int_0^{L_{m-1}} w^2 V_{\chi_{m-1}} | \zeta_0' | \,dt}
\]
Therefore,
\[
	\limsup_{s \to 0} \lambda_{k,1}(\zeta_s) < \lambda_{k,1}(\zeta_0) + \epsilon
\]
Since $\epsilon > 0$ is arbitrary, this proves that the function
\[
	s \mapsto \lambda_{k,1}(\zeta_s)
\]
is upper semi-continuous at zero.

We next show that this function is strictly monotonically decreasing.
Fix a point $s_0$ in $(0,1)$.
Let $\phi=\phi_{k,1}$ be the eigenfunction given by Lemma \ref{efex} corresponding to $\zeta_{s_0}$.
We may assume that
\[
	\int_0^{L_{m-1}} | \phi |^2 V_{\chi_{m-1}} | \zeta_{s_0}'| \,dt = 1
\]
Define a function $\Psi:(0,1] \to \R$ by
\[
	\Psi(s) = \frac{ \int_0^{L_{m-1}} \frac{|\phi'|^2 V_{\chi_{m-1}}}{|\zeta_s '|} + \frac{k^2 |\phi|^2 | \zeta_s' |}{V_{\chi_{m-1}}} \,dt}{ \int_0^{L_{m-1}} |\phi|^2 V_{\chi_{m-1}} | \zeta_s' | \,dt}
\]
Then $\Psi(s) \ge \lambda_{k,1}(\zeta_s)$ for all $s$ in $(0,1]$, and
\[
	\Psi(s_0)= \lambda_{k,1}(\zeta_{s_0})
\]
Also $\Psi$ is differentiable at $s_0$ and $\Psi'(s_0)$ is equal to
\[
	\int_0^{P_m} \bigg( -\frac{|\phi'|^2 V_{\chi_{m-1}}}{|\zeta_{s_0} '|^2} + \frac{k^2 |\phi|^2}{V_{\chi_{m-1}}} - \lambda_{k,1}(\zeta_{s_0}) | \phi|^2 V_{\chi_{m-1}} \bigg) \frac{s_0 | G_{\chi_{m-1}}' |^2}{ |\zeta'_{s_0}|} \,dt
\]
In particular, by Lemma \ref{efes} and by the assumption that $\chi_{m-1}$ is not equal to $\beta_m$,
\[
	\Psi'(s_0)<0
\]
Therefore there is a number $s_1$ in $(s_0,1]$ such that, for $s$ in $(s_0,s_1]$,
\[
	\lambda_{k,1}(\zeta_s) \le \Psi(s) < \Psi(s_0) = \lambda_{k,1}(\zeta_{s_0})
\]
It now follows that the function
\[
	s \mapsto \lambda_{k,1}(\zeta_s)
\]
is strictly monotonically decreasing over $[0,1]$.
In particular,
\[
	\lambda_{k,1}(\chi_{m-1}) = \lambda_{k,1}(\zeta_1) < \lambda_{k,1}(\zeta_0) = \lambda_{k,1}(\beta_m)
\]
\end{proof}

Let $L_m^*$ be the length of $\beta_m$.
Define $\ell_m:[0,L_{m-1}] \to [0,L_m^*]$ by
\[
	\ell_m(t) = \int_0^t |\beta_m'(u)| \,du
\]
Define $\rho_m:[0,L_m^*] \to [0,L_{m-1}]$ by
\[
	\rho_m(s) = \min \Big\{ t \in [0,L_{m-1}] : \ell_m(t) = s \Big\}
\]
The function $\rho_m$ may be discontinuous, but the curve $\gamma_m=\beta_m \circ \rho_m$ is Lipschitz continuous, and for all $t$ in $[0,L_{m-1}]$,
\[
	\gamma_m(\ell_m(t)) = \beta_m(t)
\]
Moreover, for almost all $t$ in $[0,L_m^*]$,
\[
	|\gamma_m'(t)|=1
\]

\begin{Lemma}
\label{mbc}
This reparametrization satisfies
\[
	\lambda_{k,1}(\beta_m) \le \lambda_{k,1}(\gamma_m)
\]
\end{Lemma}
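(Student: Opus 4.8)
The plan is to follow the proof of Lemma \ref{23} essentially verbatim, with $\beta_m$, $\gamma_m$, and $\ell_m$ playing the roles that $\gamma_0$, $\chi_0$, and $\ell_0$ played there. Since $\gamma_m$ is the arc length reparametrization of $\beta_m$, it suffices to show that every Lipschitz test function for $\gamma_m$ pulls back to a test function for $\beta_m$ with the same Rayleigh quotient; the inequality $\lambda_{k,1}(\beta_m) \le \lambda_{k,1}(\gamma_m)$ is then immediate from the characterization of $\lambda_{k,1}$ as an infimum of maximal Rayleigh quotients over test spaces.

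First I would write $\beta_m = (F_{\beta_m}, G_{\beta_m})$ and $\gamma_m = (F_{\gamma_m}, G_{\gamma_m})$, and set $V_{\beta_m} = V \circ F_{\beta_m}$ and $V_{\gamma_m} = V \circ F_{\gamma_m}$. The relation $\gamma_m(\ell_m(t)) = \beta_m(t)$ gives $V_{\gamma_m}(\ell_m(t)) = V_{\beta_m}(t)$. Given any $w$ in $\Lip_0(0, L_m^*)$ with finite Rayleigh quotient, I would set $v = w \circ \ell_m$. Because $\ell_m$ is a nondecreasing Lipschitz map from $[0, L_{m-1}]$ onto $[0, L_m^*]$ with $\ell_m(0) = 0$ and $\ell_m(L_{m-1}) = L_m^*$, the composition $v$ lies in $\Lip_0(0, L_{m-1})$.

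The heart of the argument is the change of variables $s = \ell_m(t)$, $ds = |\beta_m'(t)|\,dt$. Using the chain rule $v'(t) = w'(\ell_m(t))\,|\beta_m'(t)|$ together with $|\gamma_m'| = 1$ almost everywhere and $V_{\gamma_m} \circ \ell_m = V_{\beta_m}$, this substitution converts the Rayleigh quotient of $v$ against $\beta_m$ into that of $w$ against $\gamma_m$:
\[
	\frac{\int_0^{L_{m-1}} \frac{|v'|^2 V_{\beta_m}}{|\beta_m'|} + \frac{k^2 |v|^2 |\beta_m'|}{V_{\beta_m}} \,dt}{\int_0^{L_{m-1}} |v|^2 V_{\beta_m} |\beta_m'| \,dt} = \frac{\int_0^{L_m^*} \frac{|w'|^2 V_{\gamma_m}}{|\gamma_m'|} + \frac{k^2 |w|^2 |\gamma_m'|}{V_{\gamma_m}} \,dt}{\int_0^{L_m^*} |w|^2 V_{\gamma_m} |\gamma_m'| \,dt}.
\]
Since $w \mapsto w \circ \ell_m$ is linear and injective (as $\ell_m$ is surjective), it carries the one-dimensional test space realizing $\lambda_{k,1}(\gamma_m)$ to a one-dimensional test space for $\beta_m$ with the same maximal Rayleigh quotient, and taking the infimum over test spaces yields $\lambda_{k,1}(\beta_m) \le \lambda_{k,1}(\gamma_m)$.

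The one step that requires care, and the only genuine obstacle, is justifying the change of variables when $\ell_m$ fails to be strictly increasing. The function $\ell_m$ is constant precisely on the intervals where $|\beta_m'| = 0$, that is, where $F_{\chi_{m-1}}'$ vanishes on $[0, P_m)$. On these degenerate sets the factor $|\beta_m'|$ annihilates every integrand, so they contribute nothing to either side, and the substitution $\int_0^{L_{m-1}} (h \circ \ell_m)\,|\beta_m'|\,dt = \int_0^{L_m^*} h\,ds$ remains valid for the monotone absolutely continuous $\ell_m$; the accompanying chain rule for the Lipschitz composition $v = w \circ \ell_m$ likewise holds almost everywhere. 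This is exactly the point that was already handled in the proof of Lemma \ref{23}, so I expect the verification here to be identical.
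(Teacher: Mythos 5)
Your proposal is correct and follows essentially the same route as the paper: pull back a test function $w$ on $[0,L_m^*]$ to $v = w\circ \ell_m$ on $[0,L_{m-1}]$, verify via the change of variables $s=\ell_m(t)$ that the Rayleigh quotients agree, and conclude from the variational characterization. Your extra remark about the intervals where $\ell_m$ is constant being annihilated by the factor $|\beta_m'|$ is exactly the point implicit in the paper's computation, so nothing is missing.
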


\begin{proof}
Write $\beta_m=(F_{\beta_m}, G_{\beta_m})$ and $\gamma_m=(F_{\gamma_m}, G_{\gamma_m})$.
Let $V_{\beta_m}=V \circ F_{\beta_m}$ and $V_{\gamma_m} = V \circ F_{\gamma_m}$.
Let $w$ be a function in $\Lip(0,L_m^*)$ such that
\[
	\frac{\int_0^{L_m^*} \frac{|w'|^2 V_{\gamma_m}}{|\gamma_m'|} + \frac{k^2 |w|^2 | \gamma_m'|}{V_{\gamma_m}} \,dt}{\int_0^{L_m^*} |w|^2 V_{\gamma_m} |\gamma_m'| \,dt} < \infty
\]
Define $v=w \circ \ell_m$.
Then $v$ is in $\Lip_0(0,L_{m-1})$, and changing variables yields
\[
	\frac{\int_0^{L_{m-1}} \frac{|v'|^2 V_{\beta_m}}{|\beta_m'|} + \frac{k^2 |v|^2 | \beta_m'|}{V_{\beta_m}} \,dt}{\int_0^{L_{m-1}} |v|^2 V_{\beta_m} |\beta_m'| \,dt}
	= \frac{\int_0^{L_m^*} \frac{|w'|^2 V_{\gamma_m}}{|\gamma_m'|} + \frac{k^2 |w|^2 | \gamma_m'|}{V_{\gamma_m}} \,dt}{\int_0^{L_m^*} |w|^2 V_{\gamma_m} |\gamma_m'| \,dt}
\]
It follows that
\[
	\lambda_{k,1}(\beta_m) \le \lambda_{k,1}(\gamma_m)
\]
	
\end{proof}

Define
\[
	B_m = \min \bigg\{ t \in [0,L_m^*] : V_{\gamma_m}(t)=\mu_m \bigg\}
\]
Let
\[
	L_m=L_m^*-B_m+V_\omega^{-1}(\mu_m)
\]
Define a piecewise smooth curve
\[
	\chi_m:[0,L_m] \to \R^2_+
\]
by
\[
	\chi_m(t) =
	\begin{cases}
		\omega(t) & t \in [0,V_\omega^{-1}(\mu_m)] \\
		\gamma_m(t+B_m-V_\omega^{-1}(\mu_m)) & t \in [V_\omega^{-1}(\mu_m),L_m] \\
	\end{cases}
\]

\begin{Lemma}
\label{mcm}
Assume $\lambda_{k,1}(\gamma_m) \ge \lambda_{k,1}(\omega)$.
Then
\[
	\lambda_{k,1}(\gamma_m) \le \lambda_{k,1}(\chi_m)
\]
\end{Lemma}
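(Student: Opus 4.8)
The plan is to recognize $\chi_m$ as the result of straightening the backtracking of the initial segment of $\gamma_m$, and to show this cannot decrease $\lambda_{k,1}$. Over $[0,B_m]$ the curve $\gamma_m$ lies on the $x$-axis, since the flattening that produced $\beta_m$ set $G'=0$ on $[0,P_m)$ and $B_m=\ell_m(P_m)$; there $\gamma_m$ is parametrized by arc length but $F_{\gamma_m}$ need not be monotone. Replacing $F_{\gamma_m}$ on $[0,B_m]$ by its running minimum and reparametrizing by arc length yields exactly $\omega|_{[0,V_\omega^{-1}(\mu_m)]}$, while the tail $\gamma_m|_{[B_m,L_m^*]}$ is untouched; thus $\chi_m$ is the arc-length reparametrization of the running minimum of $\gamma_m$, exactly as $\chi_0$ arose from $\beta_0$ through Lemmas \ref{12} and \ref{23}. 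The reparametrization contributes the inequality of Lemma \ref{23}, so the whole task is to show that passing to the running minimum does not decrease $\lambda_{k,1}$.

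Here the argument of Lemma \ref{12} does not apply verbatim: there the running minimum was formed over $[0,P_0]$, where $V\ge\mu_0$ and the potential term $k^2/V^2$ is dominated by $\lambda_{k,1}(\omega)$; now the straightened region only satisfies $V_{\gamma_m}\ge\mu_m$, and because $\mu_m<\mu_0$ this pointwise bound fails where $\mu_m\le V_{\gamma_m}<\mu_0$. Instead I would run the homotopy of Lemma \ref{mab}. Let $I\subseteq[0,B_m)$ be the open set where $F_{\gamma_m}$ exceeds its running minimum; on $I$ one has $V_{\gamma_m}>\mu_m$, so $I\subseteq[0,P_m)$ in the sense of Lemma \ref{efes}. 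In the formulation of Definition \ref{eigvsig}, keep $v=V_{\gamma_m}$ fixed on $[0,L_m^*]$ and set $\sigma_s$ equal to $1$ off $I$ and to $s$ on $I$, so that $(v,\sigma_1)$ reproduces $\gamma_m$, while letting $s\to0^+$ collapses the intervals of $I$ and, after reparametrization, produces $\chi_m$.

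The function $s\mapsto\lambda_{k,1}(v,\sigma_s)$ is continuous on $(0,1]$ by Kong and Zettl and upper semicontinuous at $0$, the latter by testing with a function constant on $I$, exactly as in Lemma \ref{mab}. For monotonicity, fix $s_0\in(0,1)$, let $\phi$ be the first eigenfunction of $(v,\sigma_{s_0})$, and compare with the quotient $\Psi(s)$ obtained by freezing $\phi$; then $\Psi(s)\ge\lambda_{k,1}(v,\sigma_s)$ with equality at $s_0$, and since $\sigma_s$ depends on $s$ only on $I$,
\[
	\Psi'(s_0)=\int_I\Big(-\frac{|\phi'|^2 v}{\sigma_{s_0}^2}+\frac{k^2\phi^2}{v}-\lambda_{k,1}(v,\sigma_{s_0})\,\phi^2 v\Big)\,dt.
\]
Dividing the inequality of Lemma \ref{efes} by $v$ shows the integrand is negative throughout $I\subseteq[0,P_m)$, so $\Psi'(s_0)<0$ when $I$ has positive measure; as in Lemma \ref{mab} this forces $s\mapsto\lambda_{k,1}(v,\sigma_s)$ to be strictly decreasing. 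Combining this with upper semicontinuity at $0$ and the reparametrization inequality of Lemma \ref{23} gives $\lambda_{k,1}(\gamma_m)=\lambda_{k,1}(v,\sigma_1)\le\lambda_{k,1}(\chi_m)$, and when $I$ is null the curves $\gamma_m$ and $\chi_m$ differ only by reparametrization and the inequality is immediate.

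The main obstacle is the monotonicity step, and specifically the bookkeeping that makes Lemma \ref{efes} applicable all along the family: one must check that $(v,\sigma_s)$ coincides with $\omega$ and has speed $1$ on $[0,V_\omega^{-1}(\mu_{m-1})]$ (which holds because $\gamma_m=\omega$ there), that $v(L_m^*)=V(r_2)$, and that $\lambda_{k,1}(v,\sigma_s)\ge\lambda_{k,1}(\omega)$ for every $s$. The last of these I would obtain by a continuation argument from $s=1$, where it is the hypothesis $\lambda_{k,1}(\gamma_m)\ge\lambda_{k,1}(\omega)$, using that the eigenvalue only increases as $s$ decreases. Some care is also needed because $\sigma_s$ is discontinuous across $\partial I$ and degenerates as $s\to0^+$; as in Lemma \ref{mab} these are absorbed by the continuity theorem of Kong and Zettl on $(0,1]$ together with the semicontinuity argument at the endpoint.
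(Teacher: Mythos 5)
Your proposal is correct and follows essentially the same route as the paper: the paper likewise collapses the backtracking set $I$ via a one-parameter family $\sigma_s$ interpolating between $1$ and the indicator of $J$ (with $v=V_{\gamma_m}$ held fixed), invokes Kong--Zettl for continuity on $(0,1]$, proves upper semicontinuity at $0$ by testing with a fixed Lipschitz function, and obtains monotonicity from the derivative formula for the frozen Rayleigh quotient together with Lemma \ref{efes}, finishing with the reparametrization step to pass from $(V_{\gamma_m},\sigma_0)$ to $\chi_m$. The only real difference is that the paper takes the continuous interpolation $\sigma_s(t)=\max\{1-d(t,J)/s,\;s\}$ rather than your piecewise-constant choice, precisely so that Lemma \ref{efes} (stated for continuous $\sigma$) applies verbatim at each $s_0$ --- the technical point about discontinuity of $\sigma_s$ that you already flag.
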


\begin{proof}
Write $\gamma_m=(F_{\gamma_m}, G_{\gamma_m})$ and let $V_{\gamma_m}=V \circ F_{\gamma_m}$.
Define
\[
	I = \bigg\{ t \in [0,B_m] : V_{\gamma_m}(t) > \min_{u \in [0,t]} V_{\gamma_m}(u) \bigg\}
\]
There are disjoint open intervals $I_1, I_2, I_3, \ldots$ such that
\[
	I = \bigcup_j I_j
\]
Let
\[
	J=[0,L_m^*] \setminus I
\]
Define a Lipschitz continuous function
\[
	h:[0,L_m^*] \to [0,L_m]
\]
such that $h(0)=0$ and for almost all $t$ in $[0,L_m^*]$,
\[
	h'(t) =
	\begin{cases}
		1 & t \in  J \\
		0 & t \in I \\
	\end{cases}
\]
Since the length of $I$ is $B_m-V_\omega^{-1}(\mu_m)$, this map is surjective.
Define a curve
\[
	\eta = \chi_m \circ h
\]
For any function $w$ in $\Lip_0(0,L_m)$, the function $v=w \circ h$ is in $\Lip_0(0,L_m^*)$ and satisfies
\[
	\frac{\int_0^{L_m^*} \frac{|v'|^2 V_{\eta}}{| \eta'|}+\frac{k^2|v|^2 | \eta'|}{V_{\eta}} \,dt}{\int_0^{L_m^*} |v|^2 V_{\eta} | \eta'| \,dt}
	=
	\frac{\int_0^{L_m} \frac{|w'|^2 V_{\chi_m}}{| \chi_m'|}+\frac{k^2|w|^2 | \chi_m'|}{V_{\chi_m}} \,dt}{\int_0^{L_m} |w|^2 V_{\chi_m} | \chi_m'| \,dt}
\]
Furthermore, if $v$ is a function in $\Lip_0(0,L_m^*)$ such that
\[
	\frac{\int_0^{L_m^*} \frac{|v'|^2 V_{\eta}}{| \eta'|}+\frac{k^2|v|^2 | \eta'|}{V_{\eta}} \,dt}{\int_0^{L_m^*} |v|^2 V_{\eta} | \eta'| \,dt} < \infty
\]
then $v$ is constant over each interval $I_1,I_2,I_3,\ldots$, and it follows that there is a function $w$ in $\Lip_0(0,L_m)$ such that $v=w \circ h$.
Therefore
\[
	\lambda_{k,1}(\eta)=\lambda_{k,1}(\chi_m)
\]	

Note that $V_{\gamma_m}$ and $V_\eta$ agree over $J$.
This implies that
\[
	\lambda_{k,1}(\eta) = \lambda_{k,1}(V_{\gamma_m}, |\eta'|)
\]
For a point $t$ in $I$, let $d(t,J)$ be the distance from $t$ to $J$.
For each $s$ in $(0,1]$, define a continuous function
\[
	\sigma_s:[0,L_m^*] \to \R
\]
by
\[
	\sigma_s(t) = \max \bigg\{ 1-\frac{d(t,J)}{s} , s \bigg\}
\]
Define
\[
	\sigma_0:[0,L_m^*] \to \R
\]
by
\[
	\sigma_0(t)=
	\begin{cases}
		1 & t \in J \\
		0 & t \in I \\
	\end{cases}
\]
Note that $\sigma_0 = | \eta' |$ almost everywhere in $[0,L_m^*]$.
In particular,
\[
	\lambda_{k,1}(\eta) = \lambda_{k,1}(V_{\gamma_m}, \sigma_0 )
\]
Also,
\[
	\lambda_{k,1}(\gamma_m) = \lambda_{k,1}(\gamma_m, \sigma_1)
\]
By a theorem of Kong and Zettl \cite[Theorem 3.1]{KZ}, the function
\[
	s \mapsto \lambda_{k,1}(V_{\gamma_m},\sigma_s)
\]
is continuous over $(0,1]$.
We now observe that it is upper semi-continuous at zero.
Let $\epsilon>0$.
There is a function $w$ in $\Lip_0(0,L_m^*)$ such that
\[
	\frac{\int_0^{L_m^*} \frac{|w'|^2 V_{\gamma_m}}{\sigma_0}+\frac{k^2|w|^2 \sigma_0}{V_{\gamma_m}} \,dt}{\int_0^{L_m^*} |w|^2 V_{\gamma_m} \sigma_0 \,dt} < \lambda_{k,1}(V_{\gamma_m},\sigma_0) + \epsilon
\]
By Lebesgue's convergence theorems,
\[
	\lim_{s \to 0} \frac{\int_0^{L_m^*} \frac{|w'|^2 V_{\gamma_m}}{\sigma_s}+\frac{k^2|w|^2 \sigma_s}{V_{\gamma_m}} \,dt}{\int_0^{L_m^*} |w|^2 V_{\gamma_m} \sigma_s \,dt}
	=
	\frac{\int_0^{L_m^*} \frac{|w'|^2 V_{\gamma_m}}{\sigma_0}+\frac{k^2|w|^2 \sigma_0}{V_{\gamma_m}} \,dt}{\int_0^{L_m^*} |w|^2 V_{\gamma_m} \sigma_0 \,dt}
\]
Therefore,
\[
	\limsup_{s \to 0} \lambda_{k,1}(V_{\gamma_m}, \sigma_s) < \lambda_{k,1}(V_{\gamma_m},\sigma_0)+ \epsilon
\]
Since $\epsilon>0$ is arbitrary, this shows that the function
\[
	s \mapsto \lambda_{k,1}(V_{\gamma_m}, \sigma_s)
\]
is upper semi-continuous at zero.

We next show that this function is monotonically decreasing over $[0,1]$.
Fix $s_0$ in $(0,1]$.
Let $\phi=\phi_{k,1}$ be the eigenfunction corresponding to $V_{\gamma_m}$ and $\sigma_{s_0}$ given by Lemma \ref{efex}.
We may assume that
\[
	\int_0^{L_m^*} |\phi|^2 V_{\gamma_m} \sigma_{s_0} \,dt = 1
\]
Define a function
\[
	\Psi:[0,1] \to \R
\]
by
\[
	\Psi(s) = \frac{ \int_0^{L_m^*} \frac{|\phi'|^2 V_{\gamma_m}}{\sigma_s} + \frac{k^2 |\phi|^2 \sigma_s}{V_{\gamma_m}} \,dt}{ \int_0^{L_m^*} |\phi|^2 V_{\gamma_m} \sigma_s \,dt}
\]
Then for all $s$ in $[0,1]$,
\[
	\Psi(s) \ge \lambda_{k,1}(V_{\gamma_m},\sigma_s)
\]
Also
\[
	\Psi(s_0) = \lambda_{k,1}(V_{\gamma_m},\sigma_{s_0})
\]
Define a function
\[
	\dot \sigma_{s_0} : [0,L_m^*] \to \R
\]
by
\[
	\dot \sigma_{s_0}(t) =
	\begin{cases}
		0 & \text{if } d(t,J) = 0 \\
		1 & \text{if } d(t,J) > s_0 - s_0^2 \\
		\frac{d(t,J)}{s_0^2} & \text{if } 0 < d(t,J) \le s_0-s_0^2 \\
	\end{cases}
\]
Then $\Psi$ is differentiable at $s_0$ and $\Psi'(s_0)$ is equal to
\[
	\int_0^{L_m^*} \bigg( -\frac{|\phi'|^2 V_{\gamma_m}}{\sigma_{s_0}^2} + \frac{k^2 |\phi|^2}{V_{\gamma_m}} - \lambda_{k,1}(V_{\gamma_m},\sigma_{s_0}) V_{\gamma_m} | \phi|^2 \bigg) \dot \sigma_{s_0} \,dt
\]
In particular, by Lemma \ref{efes},
\[
	\Psi'(s_0) \le 0
\]
This proves that the function
\[
	s \mapsto \lambda_{k,1}(V_{\gamma_m}, \sigma_s)
\]
has non-positive upper right Dini derivative at every point $s_0$ in $(0,1]$. 
We have seen this function is continuous over $(0,1]$ and upper semi-continuous at zero, so this implies that it is monotonically decreasing over $[0,1]$.
Now
\[
	\lambda_{k,1}(\gamma_m) = \lambda_{k,1}(V_{\gamma_m}, \sigma_1) \le \lambda_{k,1}(V_{\gamma_m}, \sigma_0) = \lambda_{k,1}(\eta) = \lambda_{k,1}(\chi_m)
\]

\end{proof}

We can now prove Lemma \ref{mm1}, concluding the argument.

\begin{proof}[Proof of Lemma 4.4]
Note that $\chi_m$ is in $\Gamma_{\mu_m}$.
Suppose $\chi_m$ is not equal to $\chi_{m-1}$ and
\[
	\lambda_{k,1}(\chi_{m-1}) \ge \lambda_{k,1}(\omega)
\]
Then $\chi_{m-1}$ is not equal to $\beta_m$, so by Lemmas \ref{mab}, \ref{mbc}, and \ref{mcm},
\[
	\lambda_{k,1}(\chi_{m-1}) < \lambda_{k,1}(\beta_m) \le \lambda_{k,1}(\gamma_m)  \le \lambda_{k,1}(\chi_m)
\]
\end{proof}

\end{document}